\newtheorem{theorem}{Theorem}[section]
\newtheorem{lemma}{Lemma}[section]
\newtheorem{proposition}{Proposition}[section]
\theoremstyle{definition}
\theoremstyle{remark}
\newtheorem{remark}{Remark}[section]
\numberwithin{equation}{section}
\def\f{\frac}
\def\hf1{^\f{1}{1-\xi^2}}
\def\be{\begin{equation}}
\def\en{\end{equation}}
\def\bs{\begin{split}}
\def\es{\end{split}}
\def\ba{\begin{align}}
\def\ea{\end{align}}
\author[Z. Qiu]{Zhaoyang Qiu}
\address{School of Mathematics and Statistics, Huazhong University of Science and Technology, Wuhan, 430074, China.}
\email{ZHQMATH@163.com}
\author[H. Wang]{Huaqiao Wang}
\address{College of Mathematics and Statistics, Chongqing University, Chongqing, 401331, China.}
\email{hqwang111@163.com }
\title[ Cahn-Hilliard-Navier-Stokes equation with random initial data]
{Well-posedness for the Cahn-Hilliard-Navier-Stokes equation with random initial data}
\keywords{CH-NS equation, random initial data, global solution, local solution, .}
\subjclass[2010]{35Q35, 76D05, 35A07}
\date{\today}
\begin{document}
\begin{abstract}
We consider the almost sure well-posedness of the Cauchy problem to the Cahn-Hilliard-Navier-Stokes equation with a randomization initial data on a torus $\mathbb{T}^3$. First, we prove the local existence and uniqueness of solution. Furthermore, we prove the global existence and uniqueness of solution and give the relative probability estimate under the asssumption of small initial data.
\end{abstract}

\maketitle
\section{Introduction}

In this paper, we consider the Cahn-Hilliard-Navier-Stokes (CH-NS) equation with a randomization initial data on a torus $\mathbb{T}^3$, which couples the Navier-Stokes equation governing the fluid velocity with a Cahn-Hilliard equation for the relative density of atoms of one of the fluids, reading as follows:
\begin{align}\label{Equ1.1}
\begin{cases}
u_t-\nu_{1}\Delta u+(u\cdot \nabla)u+\nabla \pi-\mu\nabla \phi=0,\\
\phi_t-\nu_{3}\triangle \mu+(u\cdot\nabla)\phi=0,\\
\mu=-\nu_{2}\triangle \phi+\alpha f(\phi),\\
\nabla\cdot u=0,
\end{cases}
\end{align}
where $u, \pi$ and the phase parameter $\phi$ represent the velocity, pressure and the relative concentration of one of the fluids, respectively. The coefficients $\nu_{1}>0, \nu_{3}>0$ denote the kinematic viscosity of the fluid and the mobility constant, respectively. Here $\mu$ stands for the chemical potential of binary mixtures and the two physical parameters $\nu_{2}, \alpha$ describe the interaction between two phases. Especially, the constant $\nu_{2}$ is related to the thickness of the interface.

Define $F(\xi)=\int_{0}^{\xi}f(r)dr$ being typical double-well potential. In physical background, a representative example of $F$ is logarithmic type. Usually, we use a polynomial approximation of the type $F(r) = C_1r^4-C_2r^2$ taking place the type of logarithmic. Therefore, here the most physically relevant case $f(\phi)=c_1\phi^3-c_2\phi$ is considered. Hereafter, for simplicity, we set $c_1=c_2=\nu_{1}=\nu_{2}=\nu_{3}=\alpha=1$ (without loss of generality).

Substantial developments of CN-HS equation have been made in recent years. Cao and Gal \cite{CAO} proved the global existence of solutions for the two-phase fluid system with mixed partial viscosity and mobility, and the global existence and uniqueness of classical solutions for the system without viscosity but with full mobility in 2D. Gal and Grasselli \cite{gal} analyzed the asymptotic behaviour of its variational solution. Abels and Feireisl \cite{Abels} considered the global weak solution to the compressible case. We refer the reader to \cite{FGG,FGK, gal1} for more results. Regarding the stochastic case, Medjo \cite{Mejdo} obtained the existence and uniqueness of variational solution which is strong in probability sense but weak in PDE sense in 2D, to the system forced by a multiplicative noise.

In this paper, we are devoting to proving the global and almost surely local well-posedness of the Cauchy problem to the equation (\ref{Equ1.1}) with a randomization initial data on a torus $\mathbb{T}^3$. Also, we give the probability estimate of the well-poesdness of global solution under the assumption of small initial data. The pioneering works for the randomization initial data problem were developed by Burq and Tzvetkov \cite{burq, burq1} for the nonlinear wave equation, which established the local and global solution respectively. Further development has been achieved, in \cite{du} for MHD equation, in \cite{wang} for quasigeostrophic equation, in \cite{MEN} for nonlinear wave equation of power type, in \cite{zhang,zhang1, cui} for the Navier-Stokes equation. Especially, Zhang and Fang \cite{zhang} also covered the case that the domain is a whole space $\mathbb{R}^N, N\geq 3$.

Note that owing to the coupled construction and higher nonlinearity of the system, it is rather challenging to study it in mathematical. Compared with the global existence result of Navier-Stokes equation \cite{kato}, for closing the estimates we have to restrict parameter $\delta\in (\frac{2}{7},1)$, see Theorem 1.2. Nevertheless, our result still includes a large range of Sobolev space. We remark that both of results can also be extended to the whole space case in the spirit of \cite{zhang}.

Define $\mathcal{H}^\alpha(\mathbb{T}^3):=\left\{f\in H^\alpha({\mathbb{T}^3})~{\rm and }~ \int_{\mathbb{T}^3}fdx=0\right\}$.  For any $\alpha\in \mathbb{R}^+$, $f\in \mathcal{H}^\alpha(\mathbb{T}^3)$ can be written as
\begin{eqnarray}
f(x)=\sum_{k\in \mathbb{Z}_\ast^3}f_ke_k,
\end{eqnarray}
where $e_k = e^{ik\cdot x}$ and $\mathbb{Z}_*^3=\mathbb{Z}^3/\{0,0,0\}$, endowed with the norm
\begin{eqnarray*}
\|f\|^2_{\mathcal{H}^\alpha}\approx\sum_{k\in \mathbb{Z}^3_\ast}|k|^{2\alpha}|f_k|^2.
\end{eqnarray*}
Define the mapping
\begin{eqnarray*}
\omega\longmapsto f^\omega=\sum_{k\in \mathbb{Z}^3_\ast}f_{k}e_kh_k(\omega),
\end{eqnarray*}
from $\Omega$ to $\mathcal{H}^\alpha(\mathbb{T}^3)$, where the sequence $\{h_k(w)\}_{k\in \mathbb{Z}_*^3}$ is independent and identically distribution (iid) Gaussian random variables defined on a fixed probability space $(\Omega, \mathcal{F},\mathbb{P})$. One can check that the mapping is measurable and $f^\omega\in L^2(\Omega;\mathcal{H}^\alpha)$ is $\mathcal{H}^\alpha$-valued random variable. We call $f^\omega$ the randomization of $f$.

Now, we state our main results.

\begin{theorem}[Local existence and uniqueness]\label{the1.1} Assume that $(u_0,\phi_0)\in \mathcal{H}^\alpha\times \mathcal{H}^{\alpha+\frac{3}{2}} $ for any $\alpha\geq \frac{1}{2}$ and $(u_0^\omega, \phi_0^\omega)$ be its randomization,  then, for almost all $\omega\in \Omega$, there exists a $T_\omega\in \left[0,1\right]$ such that equation (\ref{Equ1.1}) with the initial data $(u_0^\omega,\phi_0^\omega)$ has a unique solution satisfying
\begin{eqnarray*}
&&u^\omega-e^{t\triangle}u_0^\omega\in C([0,T_\omega]; \mathcal{H}^{\alpha}(\mathbb{T}^3))\cap L^{32}([0,T_\omega]; L^4(\mathbb{T}^3)),\\
&&\phi^\omega-e^{-t(-\triangle)^2}\phi_0^\omega \in C([0,T_\omega]; \mathcal{H}^{\alpha+\frac{1}{2},4}(\mathbb{T}^3))\cap L^{6}([0,T_\omega]; \mathcal{H}^{\alpha+\frac{3}{2}}(\mathbb{T}^3)).
\end{eqnarray*}
More precisely, there exists an event $\Omega_T$ for each $T\in \left[0,1\right]$ and constants $c_1, c$ such that
\begin{eqnarray*}
\mathbb{P}(\Omega_T)\geq 1-c_1{\rm exp}\left(-\frac{c}{\|(u_0,\phi_0)\|^2_{\mathcal{H}^\alpha \times\mathcal{H}^{\alpha+\frac{3}{2}} }T^{\frac{2}{9}}}\right),
\end{eqnarray*}
then for any $\omega\in \Omega_{T}$, there exists a unique solution $(u^\omega,\phi^\omega)$ on the time interval $[0,T]$.
\end{theorem}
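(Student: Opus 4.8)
The plan is to solve for the nonlinear part of the solution by a contraction mapping argument, exploiting that, although the randomized data $(u_0^\omega,\phi_0^\omega)$ has only the regularity of $(u_0,\phi_0)$, its free evolutions possess improved space-time integrability for almost every $\omega$. Write $u^\omega=e^{t\Delta}u_0^\omega+w$ and $\phi^\omega=e^{-t(-\Delta)^2}\phi_0^\omega+\eta$, and set $z_1:=e^{t\Delta}u_0^\omega$, $z_2:=e^{-t(-\Delta)^2}\phi_0^\omega$; since $u_0$ is mean-zero and divergence-free and $f(r)=r^3-r$, applying the Leray projection $\Pi$ to the momentum equation and using $\nabla\cdot u=0$ recasts \eqref{Equ1.1} with data $(u_0^\omega,\phi_0^\omega)$ as the coupled system of Duhamel equations
\[
w(t)=-\int_0^t e^{(t-s)\Delta}\,\Pi\Big[\big((z_1+w)\cdot\nabla\big)(z_1+w)-\mu\,\nabla(z_2+\eta)\Big]\,ds=:\Phi_1(w,\eta),
\]
\[
\eta(t)=\int_0^t e^{-(t-s)(-\Delta)^2}\Big[\Delta f(z_2+\eta)-\nabla\cdot\big((z_1+w)(z_2+\eta)\big)\Big]\,ds=:\Phi_2(w,\eta),
\]
with $\mu=-\Delta(z_2+\eta)+f(z_2+\eta)$. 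A solution of the type asserted in the theorem is exactly a fixed point of $\Phi=(\Phi_1,\Phi_2)$ in a small ball of $X_T\times Y_T$, where $X_T:=C([0,T];\mathcal{H}^\alpha)\cap L^{32}([0,T];L^4)$ and $Y_T:=C([0,T];\mathcal{H}^{\alpha+\frac{1}{2},4})\cap L^{6}([0,T];\mathcal{H}^{\alpha+\frac{3}{2}})$ are precisely the spaces in which $u^\omega-e^{t\Delta}u_0^\omega$ and $\phi^\omega-e^{-t(-\Delta)^2}\phi_0^\omega$ are claimed to lie; these may have to be supplemented by a few auxiliary mixed norms used only in the nonlinear estimates below.

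The first step is the probabilistic linear estimate. Since the $h_k$ are i.i.d.\ Gaussians, the Khintchine inequality gives, pointwise in $(t,x)$ and for every $p\ge 2$, $\|e^{t\Delta}u_0^\omega\|_{L^p_\omega}\lesssim\sqrt{p}\,\big(\sum_{k}e^{-2t|k|^2}|(u_0)_k|^2\big)^{1/2}$, and analogously for $z_2$. Combining this with Minkowski's inequality (to move $L^p_\omega$ to the outside of the space-time norms, legitimate once $p$ exceeds the space and time exponents) and with the $L^p$-boundedness and the smoothing of the Fourier multipliers $e^{t\Delta}$ and $e^{-t(-\Delta)^2}$ yields, for every $T\in[0,1]$, an event $\Omega_T$ with
\[
\mathbb{P}(\Omega_T)\ge 1-c_1\exp\!\Big(-\frac{c}{\|(u_0,\phi_0)\|^2_{\mathcal{H}^\alpha\times\mathcal{H}^{\alpha+\frac{3}{2}}}\,T^{2/9}}\Big),
\]
on which $z_1$ and $z_2$ are bounded, in every norm of $X_T\times Y_T$ and in every auxiliary norm, by a fixed small absolute constant $\varepsilon$. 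The exponent $2/9$ records the smallest power of $T$ gained in these mixed space-time estimates of the free evolutions; it is this worst linear bound that, after the standard optimization in $p$ in the Gaussian large-deviation estimate, propagates to the displayed probability.

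The second step is the deterministic nonlinear estimate. Using the parabolic smoothing of $e^{t\Delta}\Pi$ and of $e^{-t(-\Delta)^2}$ in the mixed Lebesgue and Sobolev norms defining $X_T$ and $Y_T$ (maximal regularity and Strichartz-type bounds on $\mathbb{T}^3$), together with H\"older and Sobolev product estimates, one controls each nonlinear term: the transport term $((z_1+w)\cdot\nabla)(z_1+w)$, split into its purely $z_1$, purely $w$ and mixed pieces; the capillary coupling $\mu\,\nabla(z_2+\eta)$, whose top-order part $-\Delta(z_2+\eta)\,\nabla(z_2+\eta)$ is rewritten, modulo a gradient annihilated by $\Pi$, as $\nabla\cdot\big(\nabla(z_2+\eta)\otimes\nabla(z_2+\eta)\big)$, so that the extra derivative is absorbed by the semigroup; the cubic term $\Delta f(z_2+\eta)=\Delta\big((z_2+\eta)^3\big)-\Delta(z_2+\eta)$, handled using that $\mathcal{H}^{\alpha+\frac{3}{2}}$ is a Banach algebra (since $\alpha+\frac{3}{2}>\frac{3}{2}$), so that only two derivatives are lost and are then recovered from the smoothing of $e^{-t(-\Delta)^2}$; and the coupling $\nabla\cdot\big((z_1+w)(z_2+\eta)\big)$. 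One thereby shows that, on $\Omega_T$, $\Phi$ maps a ball $B_\rho\subset X_T\times Y_T$ with $\rho$ of order $\varepsilon$ into itself and is a contraction there, the required smallness coming from the bound $\varepsilon$ on $z_1$ and $z_2$ together with the powers of $T$ produced by the time integrations. The Banach fixed point theorem then furnishes a unique $(w,\eta)\in B_\rho$, hence the asserted $(u^\omega,\phi^\omega)$; uniqueness in the full solution class follows by applying the same estimates to the difference of two solutions together with Gronwall's inequality, and the almost sure statement follows because $\bigcup_{T>0}\Omega_T$ has full probability, so for a.e.\ $\omega$ one may take any $T_\omega$ with $\omega\in\Omega_{T_\omega}$.

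The main obstacle is the tension between the velocity space $\mathcal{H}^\alpha$ at the Navier-Stokes scaling exponent $\alpha\ge\frac{1}{2}$ --- at which the self-interaction $(w\cdot\nabla)w$ of the remainder is only critical --- and the cubic Cahn-Hilliard nonlinearity together with the capillary coupling $\mu\nabla\phi$, which carries the second-order product $\Delta\phi\,\nabla\phi$. Making all of these estimates close simultaneously is what forces the higher regularity $\phi_0\in\mathcal{H}^{\alpha+\frac{3}{2}}$, the companion space $Y_T$ built on $\mathcal{H}^{\alpha+\frac{1}{2},4}$ and $\mathcal{H}^{\alpha+\frac{3}{2}}$, and the particular auxiliary exponents ($L^{32}([0,T];L^4)$ for $z_1$ and $L^{6}([0,T];\mathcal{H}^{\alpha+\frac{3}{2}})$ for the phase); balancing the competing powers of $T$ across these terms is precisely what produces the exponent $2/9$ in the probability bound.
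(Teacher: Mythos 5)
Your overall architecture coincides with the paper's: subtract the free evolutions $\xi=e^{t\Delta}u_0^\omega$, $\eta=e^{-t(-\Delta)^2}\phi_0^\omega$, use the Burq--Tzvetkov large-deviation estimate for the randomized data, absorb the $\nabla F(\phi)$ part of $\mu\nabla\phi$ into the pressure via the Leray projector, and close a contraction for the remainder in exactly the spaces $C_t\mathcal{H}^\alpha\cap L^{32}_tL^4_x$ and $C_t\mathcal{H}^{\alpha+\frac{1}{2},4}\cap L^6_t\mathcal{H}^{\alpha+\frac{3}{2}}$. However, one step fails as written: you define $\Omega_T$ as the event on which $\xi$ and $\eta$ are bounded by a \emph{fixed small absolute constant} $\varepsilon$ in all the relevant norms, and you justify the $T^{2/9}$ in the probability by asserting that the linear free-evolution estimates gain a power of $T$. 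They do not, at least not uniformly over the norms you need: $\|e^{t\Delta}u_0^\omega\|_{L^\infty([0,T];\mathcal{H}^\alpha)}$ tends to $\|u_0^\omega\|_{\mathcal{H}^\alpha}$ as $T\to 0$ and gains no factor of $T$ whatsoever (likewise for the $L^\infty_t\mathcal{H}^{\alpha+\frac{1}{2},4}$ norm of $\eta$). Consequently the probability of your event is only $1-c_1\exp\bigl(-c\varepsilon^2/\|(u_0,\phi_0)\|^2\bigr)$, uniformly in $T$; it does not tend to $1$ as $T\to 0$, the union $\bigcup_j\Omega_{1/j}$ does not have full measure, and the ``for almost all $\omega$'' part of the theorem is lost.

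The repair --- and this is what the paper actually does --- is to let the threshold blow up as $T\to 0$: define $E_{\lambda,T}$ as the event where the free-evolution norms exceed $\lambda$, show that every Duhamel term carries an explicit positive power of $T$ (the paper collects these into a factor $T^{2/3}$ in Proposition 2.1), and then choose $\lambda=\varepsilon T^{-1/9}$ so that $T^{2/3}(\lambda^2+\lambda^3+\cdots)$ remains small enough for the contraction in the ball $B(0,4C\lambda^3)$, while the large-deviation bound gives $\mathbb{P}(E_{\lambda,T})\le c_1\exp\bigl(-c\lambda^2/\|(u_0,\phi_0)\|^2\bigr)=c_1\exp\bigl(-c\varepsilon^2/(\|(u_0,\phi_0)\|^2T^{2/9})\bigr)\to 0$. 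This is the true source of the exponent $2/9$: the power of $T$ gained in the nonlinear estimates divided by the degree (three, from $\phi^2\Delta\phi$ and $\phi|\nabla\phi|^2$) of the highest nonlinearity, not a gain in the linear estimates. Your closing paragraph gestures at this balancing, but the mechanism implemented in the body of your argument is the wrong one and would not yield the stated theorem.
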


\begin{theorem}[Global existence and uniqueness]\label{the1.2} Assume that initial data $(u_0,\phi_0)$ satisfies $\|(u_0,\phi_0)\|_{{\mathcal{H}^1}\times \mathcal{H}^{2}}\leq \sqrt{-\frac{c}{\ln(1-\varepsilon)}}$  for $\varepsilon\in (0,1)$ and $(u_0^\omega, \phi_0^\omega)$ be its randomization, then there exists an event $\Lambda\subset \Omega$ with $\mathbb{P}(\Lambda)\geq \varepsilon$ such that for all $\omega\in \Lambda$, $\delta\in (\frac{2}{7},1)$, equation (\ref{Equ1.1}) with the initial data $(u_0^\omega, \phi_0^\omega)$ has a unique global solution satisfying
\begin{eqnarray*}
&&u^\omega-e^{t\triangle}u_0^\omega \in C([0,\infty);t^{\frac{1-\delta}{2}}L^{\frac{3}{\delta}}(\mathbb{T}^3))\cap  C([0,\infty);t^{\frac{1}{2}}\mathcal{H}^{1,3}(\mathbb{T}^3)),\\
&&\phi^\omega-e^{-t(-\triangle)^2}\phi_0^\omega\in C([0,\infty);t^{\frac{1-\delta}{3}}\mathcal{H}^{1,{\frac{3}{\delta}}}(\mathbb{T}^3))\cap  C([0,\infty);t^{\frac{1}{3}}\mathcal{H}^{2,3}(\mathbb{T}^3)).
\end{eqnarray*}
\end{theorem}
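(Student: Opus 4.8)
We outline the approach, which adapts Kato's small-data scheme (cf. \cite{kato}) to randomized data in the spirit of \cite{burq1,zhang}. The first step is the decomposition $u^\omega = e^{t\Delta}u_0^\omega + v$, $\phi^\omega = e^{-t(-\Delta)^2}\phi_0^\omega + \psi$, keeping $U := e^{t\Delta}u_0^\omega$ and $\Phi := e^{-t(-\Delta)^2}\phi_0^\omega$ as prescribed zero-mean data. Using $\mu\nabla\phi = -\Delta\phi\,\nabla\phi + \nabla F(\phi)$ and the fact that the Leray projection $\pa$ annihilates gradients, the coupling term in the momentum balance simplifies to $\pa(\mu\nabla\phi) = -\pa\,\Dv(\nabla\phi\otimes\nabla\phi)$, so the remainder $(v,\psi)$ satisfies the Duhamel system
\begin{align*}
v &= -\int_0^t e^{(t-s)\Delta}\pa\,\Dv\Big((U+v)\otimes(U+v) + \nabla(\Phi+\psi)\otimes\nabla(\Phi+\psi)\Big)\,ds,\\
\psi &= \int_0^t e^{-(t-s)(-\Delta)^2}\Big[\Delta f(\Phi+\psi) - (U+v)\cdot\nabla(\Phi+\psi)\Big]\,ds,
\end{align*}
with $f(\xi)=\xi^3-\xi$ and $v(0)=\psi(0)=0$. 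Thus the problem becomes a perturbed Navier--Stokes/Cahn--Hilliard system for $(v,\psi)$, forced by expressions built from the free evolutions.

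The next step is the only place the randomization enters. One shows that $U$ and $\Phi$ a.s.\ lie in the (global-in-time) space-time norms $X,Y$ needed to run the bilinear and trilinear estimates; the gain over the deterministic situation is that randomization upgrades the integrability of $e^{t\Delta}u_0^\omega$ and $e^{-t(-\Delta)^2}\phi_0^\omega$ beyond what Sobolev embedding from $(u_0,\phi_0)\in\mathcal H^1\times\mathcal H^2$ on $\mathbb T^3$ provides (recall $\mathcal H^1\hookrightarrow L^3$ and $\mathcal H^2\hookrightarrow L^\infty$ in 3D, which control the algebraic structure but not the critical space-time bounds). By Khintchine's inequality together with a Burq--Tzvetkov type large deviation estimate, for Gaussian $\{h_k\}$ one obtains
\begin{align*}
\mathbb{P}\Big(\|U\|_{X}+\|\Phi\|_{Y} > \lambda\Big) \le c_1\exp\Big(-\frac{c\,\lambda^{2}}{\|(u_0,\phi_0)\|^{2}_{\mathcal H^1\times\mathcal H^2}}\Big).
\end{align*}
Choosing $\lambda$ equal to the smallness threshold demanded by the fixed point below and inserting the hypothesis $\|(u_0,\phi_0)\|_{\mathcal H^1\times\mathcal H^2}\le\sqrt{-c/\ln(1-\varepsilon)}$ makes the right-hand side $\ge\varepsilon$; call the resulting event $\Lambda$.

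On $\Lambda$ one then sets up a Banach fixed point for $(v,\psi)$ in the space defined by the four weighted norms of the conclusion ($t^{(1-\delta)/2}L^{3/\delta}$ and $t^{1/2}\mathcal H^{1,3}$ for $v$; $t^{(1-\delta)/3}\mathcal H^{1,3/\delta}$ and $t^{1/3}\mathcal H^{2,3}$ for $\psi$). The $L^p$--$L^q$ smoothing estimates on $\mathbb T^3$ for $e^{t\Delta}\pa\,\Dv$ (gain of one derivative) and for $e^{-t(-\Delta)^2}$ (gain of up to four derivatives), combined with Hölder in time, carry each quadratic term — $U\otimes U$, $U\otimes v$, $v\otimes v$, the couplings $\nabla\Phi\otimes\nabla\Phi$, $\nabla\Phi\otimes\nabla\psi$, $U\cdot\nabla\psi$, $v\cdot\nabla\Phi$ — and the cubic term $\Delta((\Phi+\psi)^3)$ into the target norms with a power-of-$t$ gain, yielding an a priori bound of the form $\|(v,\psi)\|\le C(\|(U,\Phi)\|^2+\|(U,\Phi)\|\,\|(v,\psi)\|+\|(v,\psi)\|^2+\|(v,\psi)\|^3+\cdots)$. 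Since $\|(U,\Phi)\|$ is small on $\Lambda$, the standard iteration produces a unique small solution for all $t\in[0,\infty)$; uniqueness and time-continuity of the remainder follow from the Lipschitz version of the same map and from dominated convergence in the Duhamel integrals.

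The main obstacle, and the origin of the restriction $\delta\in(\tfrac27,1)$, is that the Navier--Stokes block obeys second-order parabolic scaling while the Cahn--Hilliard block obeys fourth-order scaling, so the coupled system has no exact critical space and the Kato exponent must be threaded through two competing requirements. On one side, $\Delta f(\phi)=\Delta(\phi^3-\phi)$ produces, after Leibniz, contributions like $\phi^2\Delta\phi$ and $\phi|\nabla\phi|^2$ — three copies of a factor that is only one derivative regular fed into norms demanding two derivatives — and, via $\mu\nabla\phi$, the term $\nabla\Phi\otimes\nabla\Phi$ reappears with two derivatives landing on the rough free evolution in the momentum equation; forcing every Duhamel integral to converge at $s=0$ with the correct self-similar weight drives $\delta>\tfrac27$. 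On the other side, convergence at the endpoint $s=t$ (equivalently subcriticality, i.e.\ $3/\delta>3$) forces $\delta<1$. Once this combined bilinear/trilinear estimate is secured, the scheme above closes and the bound $\mathbb P(\Lambda)\ge\varepsilon$ is immediate.
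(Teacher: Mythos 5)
Your proposal is correct and follows essentially the same route as the paper: the same free-evolution decomposition $(v,\psi)=(u-e^{t\Delta}u_0^\omega,\ \phi-e^{-t(-\Delta)^2}\phi_0^\omega)$, the same Burq--Tzvetkov large-deviation bound for the free evolutions in the global weighted Kato norms $t^{(1-\delta)/2}L^{3/\delta}$, $t^{1/2}\mathcal H^{1,3}$, $t^{(1-\delta)/3}\mathcal H^{1,3/\delta}$, $t^{1/3}\mathcal H^{2,3}$, the same contraction on the event where these norms are below the smallness threshold $\lambda$, and the same translation of the hypothesis on $\|(u_0,\phi_0)\|_{\mathcal H^1\times\mathcal H^2}$ into $\mathbb P(\Lambda)\ge\varepsilon$. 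Your identification of the source of the restriction $\delta\in(\tfrac27,1)$ (the cubic terms $\phi^2\Delta\phi$, $\phi|\nabla\phi|^2$ forcing integrability of the Duhamel kernel at $s=0$) matches where the paper needs $\alpha=\tfrac{7\delta-2}{6}>0$.
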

We remark that $\|f\|_{C([0,\infty); t^pX)}= \|t^pf\|_{C([0,\infty); X)}$ for $p\in (0,1)$ and $X$ is a Banach space.
The rest of paper is to prove Theorems \ref{the1.1}, \ref{the1.2} in Sections \ref{sec2}, \ref{sec3}.

\maketitle
\section{Proof of Theorem 1.1}\label{sec2}
In this section, our main task is to prove Theorem \ref{the1.1}. At the beginning, we use the following lemma to give a stochastic estimate for the randomization initial data.
\begin{lemma}[\!\!\protect{\cite[Lemma 3.1]{burq}}]\label{lem2.1} For all $p\geq 2$ and the sequence $\{c_k\}_{k\in \mathbb{Z}_*^3}\in l^2$, it holds that
\begin{eqnarray*}
\left\|\sum_{k\in \mathbb{Z}_*^3}c_kh_k(\omega)\right\|_{L^p(\Omega)}^2\leq C\sqrt{p}\sum_{k\in \mathbb{Z}_*^3}|c_k|^2,
\end{eqnarray*}
where $C$ is a constant and $\{h_{k}(\omega)\}_{k\in \mathbb{Z}_*^3}$ is the sequence of iid Gaussian random variables.
\end{lemma}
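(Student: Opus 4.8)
The statement is the classical Khintchine-type moment bound for Gaussian series, so the plan is to exploit the Gaussian structure directly rather than to do any genuine analysis. First I would check that the series $\sum_{k\in\mathbb{Z}_*^3}c_kh_k(\omega)$ really defines an element of $L^p(\Omega)$: since the $h_k$ are iid, centered and (after a harmless normalization) of unit variance, the partial sums $S_N:=\sum_{|k|\le N}c_kh_k$ satisfy $\mathbb{E}|S_N-S_M|^2=\sum_{M<|k|\le N}|c_k|^2\to0$ as $M,N\to\infty$ because $\{c_k\}\in\ell^2$; hence $S_N\to S$ in $L^2(\Omega)$. Being an $L^2$-limit of finite Gaussian combinations, $S$ is itself a centered Gaussian whose variance equals $\sigma^2:=\sum_{k\in\mathbb{Z}_*^3}|c_k|^2$.

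Once $S$ is known to be Gaussian the estimate follows by scaling. Writing $S=\sigma Z$ with $Z\sim N(0,1)$ — and, for complex coefficients, first splitting $S$ into its real and imaginary parts, each a centered Gaussian of variance at most $\sigma^2$, and using the triangle inequality in $L^p(\Omega)$ — one has $\|S\|_{L^p(\Omega)}=\sigma\,\|Z\|_{L^p(\Omega)}$, so it remains only to control $\|Z\|_{L^p(\Omega)}$. Here I would invoke the explicit identity $\mathbb{E}|Z|^p=\frac{2^{p/2}}{\sqrt\pi}\,\Gamma\!\left(\frac{p+1}{2}\right)$ together with Stirling's formula, which gives $\left(\mathbb{E}|Z|^p\right)^{1/p}\le C\sqrt p$ for all $p\ge2$. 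Hence $\|S\|_{L^p(\Omega)}\le C\sqrt p\,\big(\sum_{k}|c_k|^2\big)^{1/2}$, and squaring yields the stated bound.

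An alternative worth recording — one that avoids the closed-form moment computation and extends to non-Gaussian $\{h_k\}$, e.g.\ Bernoulli or general sub-Gaussian variables — is the combinatorial moment method: it is enough to treat $p=2m$ an even integer, since for general $p\in[2,\infty)$ the monotonicity $\|\cdot\|_{L^p(\Omega)}\le\|\cdot\|_{L^{2\lceil p/2\rceil}(\Omega)}$ on the probability space costs only a bounded constant (because $2\lceil p/2\rceil\le2p$). For $p=2m$ one expands $\mathbb{E}\big|\sum_k c_kh_k\big|^{2m}$ by the multinomial theorem, discards every term carrying an odd power of some $h_k$ by independence and centering, and bounds the remaining terms using $\mathbb{E}h_k^{2j}\le(Cj)^j$; counting the surviving pairings produces a factor $(Cm)^m\sigma^{2m}$, whence $\big\|\sum_k c_kh_k\big\|_{L^{2m}(\Omega)}\le C\sqrt m\,\sigma$. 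I do not expect a real obstacle in either route; the only point needing attention is pinning down the sharp $\sqrt p$ dependence of the constant — precisely what Stirling's formula, or equivalently the pairing count in the $2m$-th moment, supplies — together with the routine justification that the infinite series is summable in $L^p(\Omega)$ to begin with.
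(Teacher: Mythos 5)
The paper does not prove this lemma at all; it simply cites \cite[Lemma 3.1]{burq}, so your argument is a genuine addition rather than a variant of an in-paper proof. Your first route is correct and is in fact simpler than the cited one: because the $h_k$ here are literally iid Gaussians, the $L^2$-limit $S$ of the partial sums is itself a centered Gaussian of variance $\sigma^2=\sum_k|c_k|^2$, and the bound reduces to the single scalar estimate $\|Z\|_{L^p(\Omega)}\le C\sqrt p$ for $Z\sim N(0,1)$, which the Gamma-function identity plus Stirling gives. Burq--Tzvetkov's own proof is designed for a more general class of random variables with uniformly sub-Gaussian moments and proceeds through moment/exponential-moment estimates; your second (combinatorial, even-moment) route is essentially a sketch of that generality, so you have both the short Gaussian-specific proof and the robust one. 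One point to fix: your conclusion $\|S\|_{L^p(\Omega)}\le C\sqrt p\,\sigma$ squares to $C^2p\,\sigma^2$, \emph{not} to $C\sqrt p\,\sigma^2$, so ``squaring yields the stated bound'' does not literally produce the displayed inequality of Lemma \ref{lem2.1}. The displayed inequality as printed (with the square on the left-hand side but $\sqrt p$ on the right) is actually false for large $p$ even for a single standard Gaussian, since $\|Z\|_{L^p(\Omega)}^2\sim p/e$; it is a typo inherited from transcribing \cite{burq}, whose Lemma 3.1 reads $\bigl\|\sum_k c_kh_k\bigr\|_{L^p(\Omega)}\le C\sqrt p\,\bigl(\sum_k|c_k|^2\bigr)^{1/2}$ without the square. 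That unsquared form is exactly what you prove and exactly what is used in the proof of Lemma \ref{lem1.2}, so your argument establishes the statement that the paper actually needs; you should just state the conclusion in that form rather than claiming the misprinted one.
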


\begin{lemma}\label{lem1.2} For any $a\geq 0$, $m\geq 1, 2\leq p\leq r<\infty$, $2\leq q\leq \infty, \alpha\in \mathbb{R}$, it holds
\begin{eqnarray}
\left\|t^ae^{-t(-\triangle)^m}f^{\omega}\right\|_{L_\omega^rL_t^q\mathcal{H}^{\alpha,p}}\leq C\sqrt{r}\|f\|_{\mathcal{H}^{\alpha}}.
\end{eqnarray}
Moreover, there exist constants $c,c_1$ such that
\begin{eqnarray}\label{2.2}
\mathbb{P}(\mathcal{S}_{\lambda, T})\leq c_1{\rm exp}\left(-\frac{c\lambda^2}{\|f\|^2_{\mathcal{H}^\alpha}}\right),
\end{eqnarray}
where $\mathcal{S}_{\lambda, T}:=\left\{\omega\in \Omega;\; \left\|t^{a}e^{-t(-\triangle)^m}f^\omega\right\|_{L_t^q\mathcal{H}^{\alpha,p}}\geq \lambda\right\}$.
\end{lemma}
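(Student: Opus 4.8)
The plan is to expand the randomization in its Fourier series and reduce the whole estimate to a deterministic computation, exchanging the probabilistic and space--time norms by Minkowski's inequality and handling the Gaussian moments through Lemma~\ref{lem2.1}. Writing $f=\sum_{k\in\mathbb{Z}_*^3}f_ke_k$ one has
\[
t^ae^{-t(-\triangle)^m}f^\omega(x)=\sum_{k\in\mathbb{Z}_*^3}t^ae^{-t|k|^{2m}}f_ke_kh_k(\omega),
\]
so that, reading the $\mathcal{H}^{\alpha,p}$ norm as $\||\nabla|^\alpha\cdot\|_{L^p}$ (equivalent, on mean-zero functions on $\mathbb{T}^3$, to the Bessel-potential norm), the Fourier mode $k$ carries the scalar kernel $t^a|k|^\alpha e^{-t|k|^{2m}}$. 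Since $p\le r$ (and, in all uses of the lemma below, $q$ is finite, so we may also assume $r\ge q$), Minkowski's integral inequality lets us push $L^r_\omega$ past both $L^q_t$ and $L^p_x$:
\[
\left\|t^ae^{-t(-\triangle)^m}f^\omega\right\|_{L^r_\omega L^q_t\mathcal{H}^{\alpha,p}}
\le\left\|\,\left\|\,t^a|\nabla|^\alpha e^{-t(-\triangle)^m}f^\omega\,\right\|_{L^r_\omega}\,\right\|_{L^q_tL^p_x}.
\]

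The next step is to apply Lemma~\ref{lem2.1} for each fixed $(t,x)$ with $c_k=t^a|k|^\alpha f_ke^{ik\cdot x}e^{-t|k|^{2m}}$; since $|e^{ik\cdot x}|=1$ the resulting bound does not depend on $x$,
\[
\left\|\,t^a|\nabla|^\alpha e^{-t(-\triangle)^m}f^\omega(x)\,\right\|_{L^r_\omega}
\le C r^{1/4}\left(\sum_{k\in\mathbb{Z}_*^3}t^{2a}|k|^{2\alpha}|f_k|^2e^{-2t|k|^{2m}}\right)^{1/2},
\]
and integrating this constant-in-$x$ bound over $\mathbb{T}^3$ costs only a fixed multiplicative factor. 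It thus remains to estimate the deterministic quantity $\big\|\big(\sum_k t^{2a}|k|^{2\alpha}|f_k|^2e^{-2t|k|^{2m}}\big)^{1/2}\big\|_{L^q_t}$. Writing it as $\|\sum_k b_k\|_{L^{q/2}_t}^{1/2}$ with $b_k(t)=t^{2a}|k|^{2\alpha}|f_k|^2e^{-2t|k|^{2m}}$ and using the triangle inequality in $L^{q/2}_t$ (licit since $q/2\ge1$) bounds it by $\big(\sum_k|k|^{2\alpha}|f_k|^2\,\|t^{2a}e^{-2t|k|^{2m}}\|_{L^{q/2}_t}\big)^{1/2}$; the substitution $s=t|k|^{2m}$ gives $\|t^{2a}e^{-2t|k|^{2m}}\|_{L^{q/2}_t}=C(a,q)|k|^{-4m(a+1/q)}\le C(a,q)$, because $a\ge0$, $m\ge1$, $q\ge2$ and $|k|\ge1$ (for $q=\infty$ one uses $\sup_t t^{2a}e^{-2t|k|^{2m}}=C_a|k|^{-4ma}$ instead). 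Summing over $k$ this is $\le C\sum_k|k|^{2\alpha}|f_k|^2\approx C\|f\|_{\mathcal{H}^\alpha}^2$, and chaining the three displays gives $\|t^ae^{-t(-\triangle)^m}f^\omega\|_{L^r_\omega L^q_t\mathcal{H}^{\alpha,p}}\le Cr^{1/4}\|f\|_{\mathcal{H}^\alpha}\le C\sqrt r\,\|f\|_{\mathcal{H}^\alpha}$, which is the first assertion.

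For the probability estimate (\ref{2.2}) I would combine this with Chebyshev's inequality: for any $r\ge2$,
\[
\mathbb{P}(\mathcal{S}_{\lambda,T})\le\lambda^{-r}\,\mathbb{E}\!\left[\left\|t^ae^{-t(-\triangle)^m}f^\omega\right\|_{L^q_t\mathcal{H}^{\alpha,p}}^r\right]
=\lambda^{-r}\left\|t^ae^{-t(-\triangle)^m}f^\omega\right\|_{L^r_\omega L^q_t\mathcal{H}^{\alpha,p}}^r
\le\left(\frac{C\sqrt r\,\|f\|_{\mathcal{H}^\alpha}}{\lambda}\right)^{r}.
\]
Optimizing the exponent over $r$, the choice $r\approx\lambda^2/(eC^2\|f\|_{\mathcal{H}^\alpha}^2)$ turns the right-hand side into $\exp\!\big(-c\lambda^2/\|f\|_{\mathcal{H}^\alpha}^2\big)$ with $c=1/(2eC^2)$; if this value of $r$ is smaller than $2$ the claimed bound holds trivially once $c_1$ is taken large enough.

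The routine ingredients are the Minkowski exchanges of norms and the Gamma-function evaluation of the $t$-integral. The one thing that genuinely has to be monitored is the ordering of the three norms $L^r_\omega$, $L^q_t$, $L^p_x$, so that each use of Minkowski's inequality runs in the admissible direction; this is exactly where the hypothesis $p\le r$ is used, and it is also why the borderline regime $q>r$ (in particular $q=\infty$ with $r$ finite) requires a small extra argument, e.g. controlling the supremum over the compact time interval by a chaining estimate for the Gaussian process in $t$ — although this case does not intervene in the probability estimate, where $r$ is sent to infinity. Apart from that, the argument is entirely driven by the single fact that the parabolic kernel $e^{-t|k|^{2m}}$, together with the factor $t^a$ for $a\ge0$, produces a negative power of $|k|$ that more than absorbs the Sobolev weight $|k|^{2\alpha}$.
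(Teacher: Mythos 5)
Your overall strategy is the same as the paper's (Minkowski exchanges of the three norms, the Gaussian moment bound of Lemma~\ref{lem2.1}, then Chebyshev and optimization in $r$, with the trivial fallback when the optimal $r$ is too small), and your treatment of the deterministic $t$-integral via the substitution $s=t|k|^{2m}$ is fine. But there is a genuine gap in the first assertion: your single Minkowski step pushes $L^r_\omega$ inside \emph{both} $L^q_t$ and $L^p_x$, which requires $r\geq q$ as well as $r\geq p$. The hypothesis only gives $p\leq r$, while $q$ ranges up to $\infty$ with $r<\infty$, and for $q=\infty$ the inequality you need, $\|\sup_t|G|\|_{L^r_\omega}\leq\sup_t\|G\|_{L^r_\omega}$, is simply false in general. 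You acknowledge the issue but then dismiss it on the grounds that it ``does not intervene in the probability estimate''; that is incorrect. The Chebyshev step uses the moment bound at the specific finite $r\approx\lambda^2/\|f\|^2_{\mathcal{H}^\alpha}$, and the paper applies the tail bound \eqref{2.2} precisely to $L^\infty_t$-type norms (the sets $E_{\lambda,T}$ and $\widetilde{E}_{\lambda,T}$ are built from $L^\infty_t\mathcal{H}^\alpha$ and weighted $L^\infty_t$ norms), so the case $q=\infty>r$ is exactly the one that matters. Invoking a chaining estimate for the Gaussian process in $t$ would be a disproportionately heavy repair.

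The paper's proof avoids this by splitting into $q\leq p$ and $q>p$ rather than on the position of $r$. In the regime $q>p$ (which covers $q=\infty$), it first reorders to $L^p_xL^r_\omega L^q_t$ --- this only uses $q\geq p$ and $r\geq p$, both available --- and then evaluates the $L^q_t$ norm \emph{deterministically, mode by mode}, via the triangle inequality $\|\sum_k g_k(t)c_kh_k\|_{L^q_t}\leq\sum_k\|g_k\|_{L^q_t}|c_k||h_k|$ with $g_k(t)=t^ae^{-t|k|^{2m}}$ and $\|g_k\|_{L^q_t}\leq C$ uniformly in $k$; only after the time variable has been eliminated does it apply Lemma~\ref{lem2.1} in $\omega$ for each fixed $x$. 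If you replace your first display by this two-case reordering, the rest of your argument (including the Gamma-function computation and the optimization $r\approx\lambda^2/(eC\|f\|_{\mathcal{H}^\alpha})^2$) goes through and matches the paper.
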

\begin{proof}  We divide the proof into two cases: $q> p$ and $q\leq p$ in the spirit of \cite{wang}.
Using the Minkowski inequality and Lemma \ref{lem2.1}, we obtain for $q\leq p$
\begin{align*}
\left\|t^ae^{-t(-\triangle)^m}f^{\omega}\right\|_{L_\omega^rL_t^q\mathcal{H}^{\alpha,p}}&\leq C\left\|\sum_{k\in \mathbb{Z}_*^{3}}|k|^{\alpha}t^ae^{-t|k|^{2m}}f_ke_kh_k(\omega)\right\|_{L_t^qL_x^pL_\omega^r}\nonumber\\
&\leq C\sqrt{r}\left\|\left\||k|^{\alpha}t^ae^{-t|k|^{2m}}f_ke_k\right\|_{l^2}\right\|_{L_t^qL_x^p}\nonumber\\
&\leq C\sqrt{r}\left\||k|^{\alpha}|f_k|\left\|t^ae^{-t|k|^{2m}}\right\|_{L_t^q}\|e_k\|_{L_x^p}\right\|_{l^2}\nonumber\\
&\leq C\sqrt{r}\left\||k|^{\alpha}|f_k|\right\|_{l^2}\nonumber\\
&\leq C\sqrt{r}\|f\|_{\mathcal{H}^\alpha},
\end{align*}
and for $q> p$
\begin{align*}
\left\|t^a e^{-t(-\triangle)^m}f^{\omega}\right\|_{L_\omega^rL_t^q\mathcal{H}^{\alpha,p}}&\leq C\left\|\sum_{k\in \mathbb{Z}_*^{3}}|k|^{\alpha}t^ae^{-t|k|^{2m}}f_ke_kh_k(\omega)\right\|_{L_x^pL_\omega^rL_t^q}\nonumber\\
&\leq C\left\|\sum_{k\in \mathbb{Z}_*^{3}}|k|^{\alpha}\left\|t^ae^{-t|k|^{2m}}\right\|_{L_t^q}|f_ke_kh_k(\omega)|\right\|_{L_x^pL_\omega^r}\nonumber\\
&\leq C\sqrt{r}\left\|\left\||k|^{\alpha}f_ke_k\right\|_{l^2}\right\|_{L_x^p}\nonumber\\
&\leq C\sqrt{r}\left\||k|^{\alpha}f_k\|e_k\|_{L_x^p}\right\|_{l^2}\nonumber\\
&\leq C\sqrt{r}\|f\|_{\mathcal{H}^\alpha},
\end{align*}
where we have used the fact that $\|e_k\|_{L_x^p}\leq C$ uniformly in $k$.

Utilizing the Chebyshev inequality, for every $r\geq p$, there exists constant $c$ such that
\begin{eqnarray}\label{2.5}
\mathbb{P}(\mathcal{S}_{\lambda, T})\leq \left(\frac{c\sqrt{r}\|f\|_{\mathcal{H}^\alpha}}{\lambda}\right)^r.
\end{eqnarray}
Inequality (\ref{2.5}) holds if $\lambda$ satisfies
\begin{eqnarray*}
\lambda\|f\|^{-1}_{\mathcal{H}^\alpha}\leq c\sqrt{p}e.
\end{eqnarray*}
Indeed, we can choose a certain constant $c_1>0$ such that
$$c_1e^{-\frac{c\lambda^2}{\|f\|^2_{\mathcal{H}^\alpha}}}\geq c_1e^{-(c\sqrt{p}e)^2}\geq 1\geq \mathbb{P}(\mathcal{S}_{\lambda, T}).$$
If not, we set
\begin{eqnarray*}
r:=\left(\frac{\lambda}{c\|f\|_{\mathcal{H}^\alpha}e}\right)^2\geq p.
\end{eqnarray*}
This completes the proof.
\end{proof}

Define the group by $\Phi(t)=e^{-t(-\triangle)^m}$ with $m=1,2$, which generated by the equation:
\begin{eqnarray*}
\partial_t v+(-\triangle)^m v=0.
\end{eqnarray*}
Therefore, equation (\ref{Equ1.1}) can be written as the integral form:
\begin{equation}\label{R1.1}
\begin{cases}
u(t)\!=\!e^{t\triangle}u_0^\omega\!-\!\int_0^te^{(t-s)\triangle}\mathrm{P}(u\cdot \nabla u+\triangle\phi\cdot \nabla\phi)ds,\\
\phi(t)\!=\!e^{-t(-\triangle)^2}\phi_0^\omega\!-\!\int_0^te^{-(t-s)(-\triangle)^2}(-u\cdot\nabla \phi+6\phi\cdot|\nabla \phi|^2+3\phi^2\cdot \triangle\phi-\triangle\phi)ds,
\end{cases}
\end{equation}
where $\mathrm{P}={\rm I}+\nabla (-\triangle)^{-1}{\rm div}$ is the Leray-Hopf projector.
\begin{remark} Since $\nabla F(\phi)=f(\phi)\nabla \phi$, and $\mu\nabla \phi=-\Delta \phi\cdot\nabla\phi+\nabla F(\phi)$ in $\eqref{Equ1.1}_1$, the term $\nabla F(\phi)$ could be absorbed into the pressure which was cancelled after applying the Leray-Hopf projector $\mathrm P$.
\end{remark}

Let $\xi=e^{t\triangle}u_0^\omega, \eta=e^{-t(-\triangle)^2}\phi_0^\omega, u=\tilde{u}+\xi, \phi=\tilde{\phi}+\eta$. In order to simplify the notation, we still use $(u,\phi)$ instead of $(\tilde{u}, \tilde{\phi})$, then $(u,\phi)$ satisfies the following integral equation:
\begin{eqnarray}\label{1.7}
\left\{\begin{array}{ll}
\!\!u(t)=-\int_0^te^{(t-s)\triangle}\mathrm{P}b_1(s) ds,\\
\!\!\phi(t)=-\int_0^te^{-(t-s)(-\triangle)^2}b_2(s)ds,
\end{array}\right.
\end{eqnarray}
where
\begin{eqnarray*}
&&b_1=(u+\xi)\cdot \nabla (u+\xi)+\triangle(\phi+\eta)\cdot \nabla(\phi+\eta),\\
&&b_2=-(u+\xi)\cdot\nabla (\phi+\eta)+6(\phi+\eta)\cdot|\nabla (\phi+\eta)|^2+3(\phi+\eta)^2\cdot \triangle(\phi+\eta)-\triangle(\phi+\eta).
\end{eqnarray*}

Define the mapping $\mathcal{M}$ as
$$\mathcal{M}:(u,\phi)\rightarrow \left(-\int_0^te^{(t-s)\triangle}\mathrm{P}b_1(s) ds, -\int_0^te^{-(t-s)(-\triangle)^2}b_2(s)ds\right).$$

We next aim to show that the mapping $\mathcal{M}$ is a contraction. The following two estimates will be applied.

\begin{lemma}[Estimate of $L^p-L^q$]\label{lem2.3}For any $1\leq p\leq q\leq \infty, m\geq 1, a\in \mathbb{R}^{+}$,  it holds
\begin{eqnarray*}
\left\|\partial_x^a e^{-t(-\triangle)^m}f\right\|_{L^q(\mathbb{T}^{n})}\leq Ct^{-\frac{a}{2m}-\frac{n}{2m}(\frac{1}{p}-\frac{1}{q})}\|f\|_{L^p(\mathbb{T}^{n})},
\end{eqnarray*}
where the constant $C$ depends only on $m,n, p,q, a$.
\end{lemma}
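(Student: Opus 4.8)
The plan is to prove the $L^p$–$L^q$ smoothing estimate for the semigroup $e^{-t(-\Delta)^m}$ on the torus $\mathbb{T}^n$ by reducing it, via Fourier series, to a pointwise decay estimate on the kernel and then invoking Young's convolution inequality. First I would write $e^{-t(-\Delta)^m}f = K_t^{(m)} * f$ where the kernel has Fourier coefficients $\widehat{K_t^{(m)}}(k) = e^{-t|k|^{2m}}$, so that $\partial_x^a e^{-t(-\Delta)^m}f = (\partial_x^a K_t^{(m)}) * f$ with $(\partial_x^a K_t^{(m)})\widehat{\ }(k) = (ik)^a e^{-t|k|^{2m}}$ (interpreting $\partial_x^a$ as a derivative of order $a$, e.g. via $(-\Delta)^{a/2}$, which is the relevant reading here). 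Young's inequality then gives
\begin{eqnarray*}
\left\|\partial_x^a e^{-t(-\Delta)^m}f\right\|_{L^q(\mathbb{T}^n)}\leq \left\|\partial_x^a K_t^{(m)}\right\|_{L^r(\mathbb{T}^n)}\|f\|_{L^p(\mathbb{T}^n)},\qquad 1+\tfrac1q=\tfrac1r+\tfrac1p,
\end{eqnarray*}
so the whole statement follows once we show $\|\partial_x^a K_t^{(m)}\|_{L^r(\mathbb{T}^n)}\leq C t^{-\frac{a}{2m}-\frac{n}{2m}(1-\frac1r)}$ for $r\geq 1$, and then substitute $1-\frac1r = \frac1p-\frac1q$.

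Next I would establish the kernel bound. For small $t$ (say $t\le 1$) the torus kernel is comparable to the whole-space kernel of $e^{-t(-\Delta)^m}$ on $\mathbb{R}^n$ plus exponentially small corrections from periodization; the whole-space kernel satisfies the scaling identity $G_t^{(m)}(x) = t^{-n/(2m)}G_1^{(m)}(t^{-1/(2m)}x)$ and $\partial_x^a G_t^{(m)}(x) = t^{-(n+a)/(2m)}(\partial_x^a G_1^{(m)})(t^{-1/(2m)}x)$, where $G_1^{(m)}$ and all its derivatives lie in $L^r(\mathbb{R}^n)$ for every $r\in[1,\infty]$ (this uses that $e^{-|k|^{2m}}$ is a Schwartz symbol, so $G_1^{(m)}$ is Schwartz — for non-integer $a$ one works with the fractional derivative, whose symbol $|k|^a e^{-|k|^{2m}}$ is still rapidly decaying and smooth away from the origin, giving an $L^r$ function). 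A change of variables then yields $\|\partial_x^a G_t^{(m)}\|_{L^r(\mathbb{R}^n)} = t^{-\frac{a}{2m}-\frac{n}{2m}(1-\frac1r)}\|\partial_x^a G_1^{(m)}\|_{L^r(\mathbb{R}^n)}$. For large $t$ the bound is immediate and easier: all frequencies $|k|\geq 1$ are damped by $e^{-t|k|^{2m}}\le e^{-t}$ and the estimate holds trivially (indeed with exponential decay), while $k=0$ is excluded since we are on $\mathcal{H}^\alpha$ with mean zero, so there is no constant mode to worry about. Alternatively, one can bypass the $\mathbb{R}^n$ comparison entirely and estimate $\|\partial_x^a K_t^{(m)}\|_{L^2}^2 \approx \sum_{k\in\mathbb{Z}_*^n}|k|^{2a}e^{-2t|k|^{2m}}$ directly by comparison with an integral, get the $r=2$ endpoint, and combine with the trivial $r=\infty$ bound $\|\partial_x^a K_t^{(m)}\|_{L^\infty}\leq\sum_k |k|^a e^{-t|k|^{2m}}$ (again an integral comparison), then interpolate to all $r\in[2,\infty]$; the range $r\in[1,2)$ needed when $p$ is close to $q$ requires a little more care but follows from the same integral-comparison philosophy applied to the Fourier-side representation of the kernel.

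I expect the main obstacle to be the correct interpretation and handling of the operator $\partial_x^a$ for non-integer (or non-multi-index) $a$ — the statement is written with $a\in\mathbb{R}^+$, so $\partial_x^a$ must be read as a fractional derivative $(-\Delta)^{a/2}$ (or a Riesz-type operator), and one must verify that its kernel still enjoys the required $L^r$ bounds after scaling. The cleanest route is the integral-comparison argument on the Fourier side, which handles any real $a\ge 0$ uniformly: for the $L^2$ bound one estimates $\sum_{k\in\mathbb{Z}_*^n}|k|^{2a}e^{-2t|k|^{2m}}\leq C\int_{\mathbb{R}^n}|\zeta|^{2a}e^{-2t|\zeta|^{2m}}d\zeta = C t^{-\frac{n+2a}{2m}}\int_{\mathbb{R}^n}|\eta|^{2a}e^{-2|\eta|^{2m}}d\eta$, which is finite, and similarly for other exponents. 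The rest — assembling the pieces, checking the bookkeeping of the exponent $\frac{a}{2m}+\frac{n}{2m}(\frac1p-\frac1q)$, and noting that on a bounded domain the small-$t$ regime is the only one that contributes the stated power of $t$ while large $t$ is harmless — is routine.
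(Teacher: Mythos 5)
The paper states this lemma without any proof at all --- it is quoted as a standard $L^p$--$L^q$ smoothing estimate for $e^{-t(-\Delta)^m}$ --- so there is no argument in the text to compare yours against; I can only assess your proposal on its own merits, and it is essentially correct and is the standard argument. The reduction via Young's inequality to the kernel bound $\bigl\|\partial_x^a K_t^{(m)}\bigr\|_{L^r(\mathbb{T}^n)}\leq Ct^{-\frac{a}{2m}-\frac{n}{2m}(1-\frac{1}{r})}$ with $1-\frac1r=\frac1p-\frac1q$ is the right skeleton, and the periodization-plus-scaling route does close it for all $r\in[1,\infty]$. Three remarks. First, for fractional $a$ the symbol $|k|^a e^{-|k|^{2m}}$ fails to be smooth at the origin, so $\partial_x^a G_1^{(m)}$ is not Schwartz but only decays like $|x|^{-n-a}$; consequently the periodization corrections are $O(1)$ in $t$ rather than ``exponentially small'' --- still harmless for $t\le 1$ because the main term is bounded below by a constant there, but the claim should be stated that way. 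Second, your alternative Fourier-side-plus-interpolation route genuinely does not reach $r\in[1,2)$ (the $L^1$ or $L^{4/3}$ norm of the kernel is not controlled by its Fourier coefficients), and this is not a cosmetic restriction: the paper repeatedly applies the lemma with $p=\tfrac43$, $q=2$ or $q=4$, e.g.\ in \eqref{1.10}, which corresponds to $r=\tfrac43<2$, so the periodization argument is the one that actually carries the load and should be presented as the proof rather than as one of two options. Third, you are right to flag the zero mode: uniformity of the constant for large $t$ requires either $a>0$ or the mean-zero normalization built into $\mathcal{H}^\alpha$, both of which hold in the paper's applications.
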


\begin{lemma}\label{lem2.4} For any $1\leq r,p,q,p',q'\leq \infty$ with $\frac{1}{r}=\frac{1}{p}+\frac{1}{q}=\frac{1}{p'}+\frac{1}{q'}$ and $a\in \mathbb{R}^+$,  it holds
\begin{eqnarray*}
\|\partial_x^a(fg)\|_{L^r(\mathbb{T}^{n})}\leq C\left(\|f\|_{L^p(\mathbb{T}^{n})}\|\partial_x^ag\|_{L^q(\mathbb{T}^{n})}+\|g\|_{L^{p'}(\mathbb{T}^{n})}\|\partial_x^a f\|_{L^{q'}(\mathbb{T}^{n})}\right),
\end{eqnarray*}
where the constant $C$ depends on $\mathbb{T}^{n}, a, p,q, p',q'$.
\end{lemma}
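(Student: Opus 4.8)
The plan is to prove this fractional Leibniz (Kato--Ponce type) rule by a Littlewood--Paley / Bony paraproduct decomposition, transferred to the torus from $\mathbb{R}^n$ (periodic Littlewood--Paley theory being available either by transference or by a direct periodic dyadic partition of unity). If $a=0$ the inequality is just H\"older's inequality with $\frac1r=\frac1p+\frac1q$, so I assume $a>0$. Let $\{\Delta_j\}_{j\ge-1}$ denote the periodic Littlewood--Paley projections (frequency support $|k|\sim 2^j$ for $j\ge 0$) and $S_j=\sum_{l<j}\Delta_l$. I will use three standard facts, all valid on $\mathbb{T}^n$ for $1<s<\infty$: the square-function equivalences $\|h\|_{L^s}\approx\|(\sum_j|\Delta_j h|^2)^{1/2}\|_{L^s}$ and $\|\partial_x^a h\|_{L^s}\approx\|(\sum_j 2^{2ja}|\Delta_j h|^2)^{1/2}\|_{L^s}$; the Bernstein inequality $\|\partial_x^a h_k\|_{L^s}\lesssim 2^{ka}\|h_k\|_{L^s}$ whenever $h_k$ has frequency support in $\{|k|\lesssim 2^k\}$; and the Fefferman--Stein vector-valued maximal inequality, together with its consequence $\|\sup_j|S_j h|\|_{L^s}\lesssim\|h\|_{L^s}$, valid for $1<s\le\infty$.

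Decompose $fg=\Pi_1+\Pi_2+\Pi_3$ with $\Pi_1=\sum_j S_{j-2}f\,\Delta_j g$ (low--high), $\Pi_2=\sum_j \Delta_j f\, S_{j-2}g$ (high--low), and $\Pi_3=\sum_{|i-j|\le 1}\Delta_i f\,\Delta_j g$ (high--high). In $\Pi_1$ each summand has frequency support in an annulus $|k|\sim 2^j$, so distinct summands are frequency-separated and, using the square-function equivalence, a Bernstein/maximal pointwise bound $|\partial_x^a(S_{j-2}f\,\Delta_j g)|\lesssim 2^{ja}\,\mathrm{M}(S_{j-2}f\,\Delta_j g)$, and Fefferman--Stein,
\[
\|\partial_x^a\Pi_1\|_{L^r}\lesssim\Big\|\Big(\sum_j 2^{2ja}|S_{j-2}f|^2|\Delta_j g|^2\Big)^{1/2}\Big\|_{L^r}\le\Big\|\sup_j|S_{j-2}f|\cdot\Big(\sum_j 2^{2ja}|\Delta_j g|^2\Big)^{1/2}\Big\|_{L^r}.
\]
H\"older's inequality with $\frac1r=\frac1p+\frac1q$, the maximal bound and the square-function equivalence then give $\|\partial_x^a\Pi_1\|_{L^r}\lesssim\|f\|_{L^p}\|\partial_x^a g\|_{L^q}$. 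Interchanging $f$ and $g$ and using $\frac1r=\frac1{p'}+\frac1{q'}$ yields $\|\partial_x^a\Pi_2\|_{L^r}\lesssim\|g\|_{L^{p'}}\|\partial_x^a f\|_{L^{q'}}$.

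The high--high term $\Pi_3$ is the heart of the matter and the step I expect to be the main obstacle: each summand $\Delta_i f\,\Delta_j g$ with $|i-j|\le1$ has frequency support only in a \emph{ball} $|k|\lesssim 2^j$, not an annulus, so distinct summands are no longer frequency-separated and the argument above breaks down. To repair this I would project once more, writing $\partial_x^a\Pi_3=\sum_k\Delta_k\partial_x^a\Pi_3=\sum_k\sum_{j\ge k-c}\Delta_k\partial_x^a(\Delta_j f\,\tilde\Delta_j g)$, with $\tilde\Delta_j=\Delta_{j-1}+\Delta_j+\Delta_{j+1}$, the constraint $j\ge k-c$ being forced by the ball support. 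Using the square-function equivalence in $k$, Bernstein $\|\Delta_k\partial_x^a(\cdot)\|_{L^r}\lesssim 2^{ka}\|\cdot\|_{L^r}$, H\"older in $x$ with exponents summing to $\frac1r$, and a Cauchy--Schwarz in $j$, one is reduced to summing the geometric series $\sum_{k\le j+c}2^{(k-j)a}$, which converges \emph{precisely because $a>0$} --- this is exactly where the hypothesis $a\in\mathbb{R}^+$ enters --- and produces $\|\partial_x^a\Pi_3\|_{L^r}\lesssim\|f\|_{L^p}\|\partial_x^a g\|_{L^q}$ (the symmetric bound by $\|g\|_{L^{p'}}\|\partial_x^a f\|_{L^{q'}}$ being equally available). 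Summing the three contributions proves the estimate for $1<r,p,q,p',q'<\infty$, which is the range actually used later in the paper; the endpoint exponents $1$ and $\infty$ follow by the standard (and, in the $L^\infty$ case, by-now classical) refinements, replacing the $\ell^2$ summation in $j$ by an $\ell^1$ or $\ell^\infty$ one --- still convergent thanks to $a>0$ --- and the maximal inequality by a trivial $L^\infty$ bound.
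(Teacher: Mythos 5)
The paper offers no proof of Lemma~\ref{lem2.4} at all: it is stated bare, as a known fractional Leibniz (Kato--Ponce type) product estimate, so there is nothing internal to compare your argument against. On its own merits, your paraproduct proof is the standard one and is essentially correct in the range $1<r,p,q,p',q'<\infty$, which is the only range the paper actually uses (all applications are with exponents such as $\tfrac43,\tfrac32,2,3,4,6,12$): the low--high and high--low pieces are handled correctly by the annulus support, the pointwise bound $|\partial_x^a(S_{j-2}f\,\Delta_jg)|\lesssim 2^{ja}\mathrm{M}(S_{j-2}f\,\Delta_jg)$, Fefferman--Stein, and the square-function characterization of $\|\partial_x^a g\|_{L^q}$; and you correctly identify the high--high piece as the crux, where the ball (rather than annulus) frequency support forces the extra projection $\Delta_k$ and the geometric sum $\sum_{k\le j+c}2^{(k-j)a}$, convergent precisely because $a>0$. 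Two caveats. First, your closing sentence understates the endpoints: the case $r=\infty$ of Kato--Ponce is \emph{not} a routine refinement (it is the Bourgain--Li / Grafakos--Maldonado--Naibo endpoint theorem), and the cases $p=1$ or $q=1$ are genuinely delicate --- the square-function equivalence and the maximal bound $\|\sup_j|S_jf|\|_{L^p}\lesssim\|f\|_{L^p}$ both fail at $L^1$, and the inequality in the literature is stated for $1<p,q\le\infty$; so the lemma as written in the paper (with the full range $1\le p,q,p',q'\le\infty$) is itself slightly too generous, and you should not claim those endpoints follow by ``standard refinements.'' Second, on $\mathbb{T}^n$ one should say a word about how $\partial_x^a$ acts on the zero Fourier mode of $fg$ (which is generally nonzero even when $f,g$ have mean zero); the block $\Delta_{-1}$ absorbs it harmlessly, but the statement $\|\partial_x^ah\|_{L^s}\approx\|(\sum_j2^{2ja}|\Delta_jh|^2)^{1/2}\|_{L^s}$ as written ignores it. Neither caveat affects the way the lemma is used in the paper.
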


\begin{proposition}\label{pro2.1} There exists a constant $C$ such that for every $\omega\in E_{\lambda,T}^c, T\in(0,1]$, the mapping $\mathcal{M}$ satisfies
\begin{eqnarray}\label{1.8}
\|\mathcal{M}(u,\phi)\|_{X_T}\leq C T^{\frac{2}{3}}\left(\lambda^2+\lambda^3+\|(u,\phi)\|^2_{X_{T}}+\| \phi\|_{L^\infty_t\mathcal{H}^{\alpha+\frac{1}{2},4}\cap L^6_t\mathcal{H}^{\alpha+\frac{3}{2}} }^3\right),
\end{eqnarray}
and
\begin{align}\label{1.9}
\begin{split}
&\|\mathcal{M}(u_1,\phi_1)-\mathcal{M}(u_2,\phi_2)\|_{X_T}\\
&\leq C T^{\frac{2}{3}}\left(\lambda+\lambda^2+\|(u_1,u_2,\phi_1,\phi_2)\|_{X_{T}}+\| (\phi_1,\phi_2)\|_{L^\infty_t\mathcal{H}^{\alpha+\frac{1}{2},4}\cap L^6_t\mathcal{H}^{\alpha+\frac{3}{2}} }^2\right)\\
&\quad\times\|(u_1-u_2, \phi_1-\phi_2)\|_{X_{T}},
\end{split}
\end{align}
where space
\begin{eqnarray*}
X_T:=L_t^\infty \mathcal{H}^\alpha\cap L_t^{32}L_x^4\times  L^\infty_t\mathcal{H}^{\alpha+\frac{1}{2},4}\cap L^6_t\mathcal{H}^{\alpha+\frac{3}{2}},
\end{eqnarray*}
and set
\begin{eqnarray*}
E_{\lambda,T}:=\left\{\omega\in \Omega; \;\|\xi\|_{L_t^\infty \mathcal{H}^\alpha\cap L_t^{32}L_x^4}+\|\eta\|_{L^\infty_t\mathcal{H}^{\alpha+\frac{1}{2},4}\cap L^6_t\mathcal{H}^{\alpha+\frac{3}{2}}}\geq \lambda\right\}.
\end{eqnarray*}
\end{proposition}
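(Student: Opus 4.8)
The plan is to prove \eqref{1.8} and \eqref{1.9} by one and the same multilinear mechanism, so that the Lipschitz bound \eqref{1.9} follows by telescoping once the bound \eqref{1.8} for $\mathcal M$ itself is in hand. The first step is to expand $b_1$ and $b_2$ into their monomials in the four functions $u,\xi,\phi,\eta$: $b_1$ is the sum of $u\cdot\nabla u$, the cross terms $u\cdot\nabla\xi+\xi\cdot\nabla u$, the pure-data term $\xi\cdot\nabla\xi$, and the three analogous terms coming from $\triangle(\phi+\eta)\cdot\nabla(\phi+\eta)$; $b_2$ is the sum of the transport term $(u+\xi)\cdot\nabla(\phi+\eta)$, the cubic terms $(\phi+\eta)\,|\nabla(\phi+\eta)|^2$ and $(\phi+\eta)^2\,\triangle(\phi+\eta)$, and the linear term $-\triangle(\phi+\eta)$. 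On the event $E_{\lambda,T}^c$ every factor containing $\xi$ is bounded in $L^\infty_t\mathcal H^\alpha\cap L^{32}_tL^4_x$ by $\lambda$, every factor containing $\eta$ is bounded in $L^\infty_t\mathcal H^{\alpha+\frac12,4}\cap L^6_t\mathcal H^{\alpha+\frac32}$ by $\lambda$, every factor containing $u$ or $\phi$ is bounded by $\|(u,\phi)\|_{X_T}$, and (for the cubic terms) each $\phi$-factor is bounded by $\|\phi\|_{L^\infty_t\mathcal H^{\alpha+\frac12,4}\cap L^6_t\mathcal H^{\alpha+\frac32}}$; this bookkeeping is precisely what produces $\lambda^2+\lambda^3+\|(u,\phi)\|_{X_T}^2+\|\phi\|^3$ on the right of \eqref{1.8}.

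For the velocity slot of $X_T$ (the norms $L^\infty_t\mathcal H^\alpha$ and $L^{32}_tL^4_x$) I first put the quadratic terms in divergence form before hitting them with the heat semigroup: since $u,\xi$ are divergence free, $\mathrm P\big((u+\xi)\cdot\nabla(u+\xi)\big)=\mathrm P\,\mathrm{div}\big((u+\xi)\otimes(u+\xi)\big)$, and since $\mathrm P$ kills gradients and $\triangle\phi\cdot\nabla\phi=\mathrm{div}(\nabla\phi\otimes\nabla\phi)-\tfrac12\nabla|\nabla\phi|^2$, also $\mathrm P\big(\triangle(\phi+\eta)\cdot\nabla(\phi+\eta)\big)=\mathrm P\,\mathrm{div}\big(\nabla(\phi+\eta)\otimes\nabla(\phi+\eta)\big)$ (the divergence-form companion of the Remark above; this is needed because $\xi\in\mathcal H^\alpha$ cannot spare a derivative when $\alpha=\tfrac12$). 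Then for a generic bilinear term I bound $\big\|e^{(t-s)\triangle}\mathrm P\,\mathrm{div}(FG)\big\|$ by Lemma \ref{lem2.3} with $m=1$, moving the derivative from $\mathrm{div}$ together with any further derivative or integrability deficit onto a factor $(t-s)^{-\gamma}$, use Lemma \ref{lem2.4} to split the remaining derivatives between $F$ and $G$, and convert Sobolev norms into the Lebesgue exponents needed via the embeddings on $\mathbb T^3$ (here $\alpha\ge\tfrac12$ is used). What is left is $\int_0^t (t-s)^{-\gamma}g(s)\,ds$ with $g$ in a suitable $L^q_t$ on $[0,T]$; by H\"older this is $\le\big\|(t-s)^{-\gamma}\big\|_{L^{q'}_s(0,T)}\|g\|_{L^q_t}\le CT^{1/q'-\gamma}\|g\|_{L^q_t}$ provided $\gamma q'<1$, and one checks term by term that $1/q'-\gamma\ge\tfrac23$.

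The phase-field slot $\phi(t)=-\int_0^t e^{-(t-s)(-\triangle)^2}b_2\,ds$ is handled the same way, now with the biharmonic semigroup, so in Lemma \ref{lem2.3} we take $m=2$ and derivative gains cost quarter-powers of $(t-s)$. For the transport term (which I write as $\mathrm{div}\big((u+\xi)(\phi+\eta)\big)$) and for the cubic terms, Lemma \ref{lem2.4} is used to keep the top-order factor in the $L^6_t\mathcal H^{\alpha+\frac32}$ component of $X_T$ and the remaining (lower-order) factors in the $L^\infty_t\mathcal H^{\alpha+\frac12,4}$ component, with Sobolev embedding (or Gagliardo--Nirenberg when $\alpha$ is near $\tfrac12$) supplying the $L^\infty_x$ or $L^q_x$ bounds for the low-order factors; the smoothing factor $(t-s)^{-\gamma}$ then leaves, after one more H\"older in $s$, a trilinear (resp. bilinear) product of $X_T$/$\lambda$ norms times $T^{\beta}$ with $\beta\ge\tfrac23$. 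The one term that must be treated by hand is the linear term $-\triangle(\phi+\eta)$: the data part can be computed exactly, $-\int_0^t e^{-(t-s)(-\triangle)^2}\triangle\eta(s)\,ds=-t\,\triangle\eta(t)$, and bounded in $X_T$ using the biharmonic bound $\|\triangle e^{-\frac t2(-\triangle)^2}v\|\lesssim t^{-1/2}\|v\|$ together with the $E_{\lambda,T}^c$-control of $\eta$, while the $-\triangle\phi$ part is estimated directly by Lemma \ref{lem2.3} (a two-derivative gain at cost $(t-s)^{-1/2}$) against the $L^6_t\mathcal H^{\alpha+\frac32}$ norm of $\phi$. Summing the finitely many monomials gives \eqref{1.8}.

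For \eqref{1.9} one uses that each monomial of $b_1,b_2$ is multilinear in $(u,\xi,\phi,\eta)$: hence $b_i(u_1,\phi_1)-b_i(u_2,\phi_2)$ is a finite sum of terms each carrying exactly one factor of the form $u_1-u_2$ or $\phi_1-\phi_2$ (possibly under a $\nabla$ or a $\triangle$) and otherwise the same factors as before, now drawn from $u_1,u_2,\xi$ or $\phi_1,\phi_2,\eta$. Running the estimates above verbatim, with the distinguished factor measured in the $X_T$-norm of the difference and the others bounded by $\|(u_1,u_2,\phi_1,\phi_2)\|_{X_T}$, $\|(\phi_1,\phi_2)\|_{L^\infty_t\mathcal H^{\alpha+\frac12,4}\cap L^6_t\mathcal H^{\alpha+\frac32}}$, or $\lambda$, reproduces exactly the right-hand side of \eqref{1.9}. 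I expect the real work to be bookkeeping rather than any single hard inequality: for \emph{each} monomial one must choose the Lemma \ref{lem2.4} splitting and the Lemma \ref{lem2.3} parameters so that simultaneously the spatial norms close under the $X_T$-control and the $\mathbb T^3$ Sobolev embeddings, \emph{and} the induced time exponent $\gamma$ is small enough that the $s$-integral converges with a surplus power $T^{\beta}$, $\beta\ge\tfrac23$. It is exactly this double constraint that forces $\alpha\ge\tfrac12$ and pins down the exponents $32$ and $6$ appearing in $X_T$; the linear term $-\triangle(\phi+\eta)$, whose available $T$-gain is the smallest, is the point requiring the most care.
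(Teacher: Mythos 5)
Your strategy coincides with the paper's: expand $b_1,b_2$ into monomials, estimate each one with the $L^p$--$L^q$ smoothing of Lemma \ref{lem2.3}, the fractional Leibniz rule of Lemma \ref{lem2.4}, H\"older in time and Sobolev embedding, use the event $E_{\lambda,T}^c$ to convert every $\xi$- or $\eta$-factor into a power of $\lambda$, and obtain \eqref{1.9} by telescoping the multilinear terms. Your two genuine additions are reasonable: writing $\mathrm{P}(\triangle\phi\cdot\nabla\phi)=\mathrm{P}\,\mathrm{div}(\nabla\phi\otimes\nabla\phi)$ (the paper instead estimates this term directly through $\triangle^{1/4}\triangle^{(2\alpha-1)/4}$ and Lemma \ref{lem2.4}), and the exact identity $-\int_0^te^{-(t-s)(-\triangle)^2}\triangle\eta\,ds=-t\,\triangle\eta(t)$ for the linear term, which the paper dismisses without detail.

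The gap is that the entire quantitative content of the proposition is deferred to ``one checks term by term that $1/q'-\gamma\ge\tfrac23$'', and that check does not come out as you assert. Running your own recipe on the very first term, $\int_0^te^{(t-s)\triangle}\mathrm{P}\,\mathrm{div}(u\otimes u)\,ds$ measured in $L^\infty_t\mathcal H^\alpha$ or $L^{32}_tL^4_x$: one pays $(t-s)^{-7/8}$, and H\"older against $\|u\|_{L^{32}_tL^4_x}$ leaves $\bigl(\int_0^t(t-s)^{-28/31}ds\bigr)^{31/32}=CT^{3/32}$ --- exactly the paper's computation in \eqref{1.10}--\eqref{1.10s}. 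Since $T\le 1$, a prefactor $T^{3/32}$ is \emph{larger} than $T^{2/3}$ and cannot be absorbed into it; the smallest exponent over all monomials governs the sum, and several of them ($3/32$, $11/94$, $5/24$, \dots) lie well below $2/3$. So your argument (like the paper's own displayed estimates) yields \eqref{1.8}--\eqref{1.9} only with $T^{3/32}$ in place of $T^{2/3}$, and the discrepancy is not cosmetic, because the exponent $2/3$ is what is used later to tune $\lambda=\varepsilon T^{-1/9}$ in the proof of Theorem \ref{the1.1}. A second, smaller mismatch that your own treatment makes visible: the linear term contributes $-t\,\triangle\eta(t)$, bounded by ($T^{1/2}$ times) the \emph{first} power of $\lambda$, and $-\int_0^te^{-(t-s)(-\triangle)^2}\triangle\phi\,ds$, which is linear in $\|\phi\|_{X_T}$; neither is accommodated by the right-hand side $\lambda^2+\lambda^3+\|(u,\phi)\|_{X_T}^2+\|\phi\|^3$ of \eqref{1.8}. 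These first-order terms are harmless for the fixed-point argument but must be added to the stated bound, and you should say so rather than let them disappear.
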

\begin{proof}
\underline{Estimate of ${\rm div}(u\otimes u)$.} Using Lemmas \ref{lem2.3}, \ref{lem2.4} and the H\"{o}lder inequality, we deduce that
\begin{eqnarray}\label{1.10}
&&\left\|\int_{0}^{t}e^{(t-s)\Delta}\mathrm{P}{\rm div}(u\otimes u)ds\right\|_{\mathcal{H}^\alpha}\nonumber\\
&&\leq\int_{0}^{t}\left\|e^{(t-s)\Delta}\mathrm{P}\triangle^{\frac{\alpha}{2}}{\rm div}(u\otimes u)\right\|_{L^2} ds\nonumber\\
&&\leq C\int_{0}^{t}(t-s)^{-\frac{7}{8}}\|u\otimes u\|_{\mathcal{H}^{\alpha, \frac{4}{3}}}ds\\
&&\leq C\|u\|_{L_t^\infty \mathcal{H}^\alpha}\|u\|_{L_t^{32}L_x^4}\left(\int_{0}^{t}(t-s)^{-\frac{28}{31}}ds\right)^\frac{31}{32}\nonumber\\
&&\leq CT^{\frac{3}{32}}\|u\|_{L_t^\infty \mathcal{H}^\alpha}\|u\|_{L_t^{32}L_x^4},\nonumber
\end{eqnarray}
and
\begin{align}\label{1.10s}
\begin{split}
&\left(\int_{0}^{T}\left\|\int_{0}^{t}e^{(t-s)\Delta}\mathrm{P}{\rm div}(u\otimes u)ds\right\|_{L^4}^{32}dt\right)^\frac{1}{32}\\
&\leq\left[\int_{0}^{T}\left(\int_{0}^{t}\left\|e^{(t-s)\Delta}\mathrm{P}{\rm div}(u\otimes u)\right\|_{L^4}ds\right)^{32}dt\right]^\frac{1}{32}\\
&\leq C\left[\int_{0}^{T}\left(\int_{0}^{t}(t-s)^{-\frac{7}{8}}\|u\|_{L^4}^2ds\right)^{32}dt\right]^\frac{1}{32}\\
&\leq CT^{\frac{3}{32}}\|u\|_{L_t^{32}L_x^4}^2.
\end{split}
\end{align}
Note that we use the divergence free condition here. For the terms containing $\xi$, we can use the similar arguments as \eqref{1.10} and \eqref{1.10s} to get
\begin{align}
\begin{split}
& \left\|\int_{0}^{t}e^{(t-s)\Delta}\mathrm{P}{\rm div}(u\otimes \xi)ds\right\|_{\mathcal{H}^\alpha}\le CT^{\frac{3}{32}}(\|\xi\|^2_{{L_t^\infty \mathcal{H}^\alpha}\cap {L_t^{32}L_x^4}}+\|u\|^2_{{L_t^\infty \mathcal{H}^\alpha}\cap {L_t^{32}L_x^4}}),\\
&\left(\int_{0}^{T}\left\|\int_{0}^{t}e^{(t-s)\Delta}\mathrm{P}{\rm div}(u\otimes \xi)ds\right\|_{L^4}^{32}dt\right)^\frac{1}{32}\le CT^{\frac{3}{32}}(\|\xi\|^2_{L_t^{32}L_x^4}+\|u\|^2_{L_t^{32}L_x^4}),
\end{split}
\end{align}
and
\begin{align}
\begin{split}
&\left\|\int_{0}^{t}e^{(t-s)\Delta}\mathrm{P}{\rm div}(\xi\otimes \xi)ds\right\|_{\mathcal{H}^\alpha}\le CT^{\frac{3}{32}}(\|\xi\|^2_{L_t^\infty \mathcal{H}^\alpha}+\|\xi\|^2_{L_t^{32}L_x^4}),\\
&\left(\int_{0}^{T}\left\|\int_{0}^{t}e^{(t-s)\Delta}\mathrm{P}{\rm div}(\xi\otimes \xi)ds\right\|_{L^4}^{32}dt\right)^\frac{1}{32}\le CT^{\frac{3}{32}}\|\xi\|^2_{L_t^{32}L_x^4}.
\end{split}
\end{align}
For convenience, after that, we all omit the estimation of cross-terms such as $\Delta\eta\cdot\nabla\phi, \Delta\phi\cdot\nabla\eta, \Delta\eta\cdot\nabla\eta, (u_1-u_2)\cdot\nabla\xi, \Delta(\phi_1-\phi_2)\cdot\nabla\eta, \Delta\eta\cdot\nabla(\phi_1-\phi_2), \eta^2\Delta(\phi_1-\phi_2),\eta|\nabla(\phi_1-\phi_2)|^2$ like these.

\underline{Estimate of $\triangle \phi\cdot \nabla \phi$.} Also, Lemmas \ref{lem2.3}, \ref{lem2.4} and the H\"{o}lder inequality give
\begin{align}
\begin{split}
&\left\|\int_{0}^{t}e^{(t-s)\Delta}\mathrm{P}(\triangle \phi\cdot \nabla \phi)ds\right\|_{\mathcal{H}^\alpha}\\
&\leq C\int_{0}^{t}\left\|e^{(t-s)\Delta}\triangle^\frac{1}{4}\triangle^{\frac{2\alpha-1}{4}}\mathrm{P}(\triangle \phi\cdot \nabla \phi)\right\|_{L_x^2}ds\\
&\leq C\int_{0}^{t} (t-s)^{-\frac{5}{8}}\|\triangle \phi\cdot \nabla \phi\|_{\mathcal{H}^{\alpha-\frac{1}{2},\frac{4}{3}}}ds\\
&\leq C\int_{0}^{t} (t-s)^{-\frac{5}{8}}\left(\|\triangle\phi\|_{\mathcal{H}^{\alpha-\frac{1}{2}}}\|\nabla \phi\|_{L_x^4}+\|\triangle\phi\|_{L_x^2}\|\nabla \phi\|_{\mathcal{H}^{\alpha-\frac{1}{2},4}}\right)ds\\
&\leq C\|\nabla \phi\|_{L_t^\infty \mathcal{H}^{\alpha-\frac{1}{2},4}}\|\triangle \phi\|_{L_t^6 \mathcal{H}^{\alpha-\frac{1}{2}}}\left(\int_{0}^{t} (t-s)^{-\frac{3}{4}}ds\right)^\frac{5}{6}\\
&\leq CT^{\frac{5}{24}}\|\nabla \phi\|_{L_t^\infty \mathcal{H}^{\alpha-\frac{1}{2},4}}\|\triangle \phi\|_{L_t^6 \mathcal{H}^{\alpha-\frac{1}{2}}},
\end{split}
\end{align}
and
\begin{align}
&\left(\int_{0}^{T}\left\|\int_{0}^{t}e^{(t-s)\triangle} \mathrm{P}(\triangle \phi\cdot \nabla \phi)ds\right\|_{L_x^4}^{32}dt\right)^\frac{1}{32}\nonumber\\
&\leq C\left[\int_{0}^{T}\left(\int_{0}^{t}(t-s)^{-\frac{3}{4}}\|\triangle \phi \cdot \nabla \phi\|_{L_x^{\frac{4}{3}}}ds\right)^{32}dt\right]^\frac{1}{32}\nonumber\\
&\leq C\left[\int_{0}^{T}\left(\int_{0}^{t}(t-s)^{-\frac{3}{4}}\|\triangle \phi\|_{L_x^2} \|\nabla \phi\|_{L_x^{4}}ds\right)^{32}dt\right]^\frac{1}{32}\\
&\leq C\|\nabla \phi\|_{L_t^\infty \mathcal{H}^{\alpha-\frac{1}{2},4}}\left[\int_{0}^{T}\left(\int_{0}^{t}(t-s)^{-\frac{9}{10}}ds\right)^{\frac{80}{3}}\left(\int_{0}^{t}\|\triangle \phi\|_{L_x^2}^{6}ds\right)^\frac{16}{3} dt\right]^\frac{1}{32}\nonumber\\
&\leq CT^\frac{11}{94}\|\nabla \phi\|_{L_t^\infty \mathcal{H}^{\alpha-\frac{1}{2},4}}\|\triangle \phi\|_{L_t^6\mathcal{H}^{s-\frac{1}{2}}}.\nonumber
\end{align}
To close the estimates, we proceed to estimate the phase parameter equation $(\ref{1.7})_2$ in the following.

\noindent \underline{Estimate of $u\cdot \nabla \phi$.} Applying Lemma \ref{lem2.4} and Lemma \ref{lem2.3} with $m=2$, we get
\begin{align}
&\left\|\int_{0}^{t}e^{-(t-s)(-\Delta)^2}\nabla(u\cdot \nabla \phi)ds\right\|_{\mathcal{H}^{\alpha-\frac{1}{2},4}}\nonumber\\
&\leq C\int_{0}^{t}\left\|e^{-(t-s)(-\Delta)^2}\nabla(u\cdot \nabla \phi)\right\|_{\mathcal{H}^{\alpha-\frac{1}{2},4}}ds\nonumber\\
&\leq C\int_{0}^{t}(t-s)^{-\frac{5}{8}}\|u\cdot \nabla \phi\|_{\mathcal{H}^{\alpha-\frac{1}{2},\frac{4}{3}}}ds\\
&\leq C\int_{0}^{t}(t-s)^{-\frac{5}{8}}\left(\|u\|_{\mathcal{H}^\alpha}\|\nabla \phi\|_{L_x^4}+\|u\|_{L_x^4}\|\nabla \phi\|_{\mathcal{H}^{\alpha-\frac{1}{2}}}\right)ds\nonumber\\
&\leq CT^\frac{11}{32}\|u\|_{L_t^\infty \mathcal{H}^\alpha}\left(\|\nabla \phi\|_{L_t^\infty \mathcal{H}^{\alpha-\frac{1}{2},4}}+\|u\|_{L_t^{32}L_x^4}\right),\nonumber
\end{align}
and
\begin{align}\label{1.15}
\begin{split}
&\left(\int_{0}^{T}\left\|\int_{0}^{t}e^{-(t-s)(-\triangle)^2} \triangle(u\cdot \nabla \phi)ds\right\|_{\mathcal{H}^{\alpha-\frac{1}{2}}}^{6}dt\right)^\frac{1}{6}\\
&\leq \left[\int_{0}^{T}\left(\int_{0}^{t}\left\|e^{-(t-s)(-\triangle)^2}\triangle^\frac{3}{4}\triangle^\frac{1}{4} (u\cdot \nabla \phi)\right\|_{\mathcal{H}^{\alpha-\frac{1}{2}}}ds\right)^6dt\right]^\frac{1}{6}\\
&\leq C\left[\int_{0}^{T}\left(\int_{0}^{t}(t-s)^{-\frac{9}{16}}\|\triangle^\frac{1}{4} (u\cdot \nabla \phi)\|_{\mathcal{H}^{\alpha-\frac{1}{2},\frac{4}{3}}}ds \right)^6dt\right]^\frac{1}{6}\\
&\leq C\left[\int_{0}^{T}\left(\int_{0}^{t}(t-s)^{-\frac{9}{16}}\left(\|u\|_{\mathcal{H}^\alpha}\|\nabla \phi\|_{\mathcal{H}^{\alpha-\frac{1}{2},4}}+\|u\|_{L_x^3}\|\nabla \phi\|_{\mathcal{H}^{\alpha,\frac{12}{5}}}\right)ds\right)^6dt\right]^\frac{1}{6}\\
&\leq CT^\frac{1}{2}\|u\|_{L_t^\infty \mathcal{H}^\alpha}\left(\|\nabla \phi\|_{L_t^\infty \mathcal{H}^{\alpha-\frac{1}{2},4}}+\|\triangle \phi\|_{L_t^6\mathcal{H}^{s-\frac{1}{2}}}\right),
\end{split}
\end{align}
where we have used the Sobolev embedding inequality $H^s(\mathbb{T}^3)\hookrightarrow L^3(\mathbb{T}^3)$ for $s\geq \frac{1}{2}$ in (\ref{1.15}).

\noindent\underline{Estimates of $\triangle f(\phi)$.} By Lemmas \ref{lem2.3}, \ref{lem2.4} and the H\"{o}lder inequality again, we conclude that
\begin{align}
\begin{split}
&\left\|\int_{0}^{t}e^{-(t-s)(-\Delta)^2}\nabla(\phi\cdot  |\nabla\phi|^2)ds\right\|_{\mathcal{H}^{\alpha-\frac{1}{2},4}}\\
&\leq C\int_{0}^{t}(t-s)^{-\frac{5}{8}}\|\phi\cdot   |\nabla\phi|^2\|_{\mathcal{H}^{\alpha-\frac{1}{2},\frac{4}{3}}}ds\\
&\leq C\int_{0}^{t}(t-s)^{-\frac{5}{8}}\left(\|\phi\|_{L_x^4}\| |\nabla\phi|^2\|_{\mathcal{H}^{\alpha-\frac{1}{2}}}+\| |\nabla\phi|^2\|_{L_x^2}\|\phi\|_{\mathcal{H}^{\alpha-\frac{1}{2},4}}\right)ds\\
&\leq CT^{\frac{3}{8}}\|\nabla \phi\|^3_{L^\infty_t\mathcal{H}^{\alpha-\frac{1}{2},4}},
\end{split}
\end{align}
and
\begin{align}
\begin{split}
&\left\|\int_{0}^{t}e^{-(t-s)(-\Delta)^2}\nabla(\phi^2\cdot\triangle\phi)ds\right\|_{\mathcal{H}^{\alpha-\frac{1}{2},4}}\\
&\leq C\int_{0}^{t}(t-s)^{-\frac{5}{8}}\|\phi^2\cdot \triangle\phi\|_{\mathcal{H}^{\alpha-\frac{1}{2},\frac{4}{3}}}ds\\
&\leq C\int_{0}^{t}(t-s)^{-\frac{5}{8}}\left(\|\phi^2\|_{L_x^4}\| \triangle\phi\|_{\mathcal{H}^{\alpha-\frac{1}{2}}}+\| \triangle\phi\|_{L_x^2}\|\phi^2\|_{\mathcal{H}^{\alpha-\frac{1}{2},4}}\right)ds\\
&\leq CT^{\frac{5}{24}}\left(\|\nabla \phi\|_{L^\infty_t\mathcal{H}^{\alpha-\frac{1}{2},4}}^3+\| \triangle\phi\|_{L^6_t\mathcal{H}^{\alpha-\frac{1}{2}}}^3\right).
\end{split}
\end{align}
Similarly, the Sobolev embedding inequality $H^{\alpha+\frac{1}{2},4}(\mathbb{T}^3)\hookrightarrow L^{12}(\mathbb{T}^3)$ if $\alpha\geq \frac{1}{2}$, Lemmas \ref{lem2.4}, \ref{lem2.3} and the H\"{o}lder inequality yield that
\begin{align}\label{2.28}
\begin{split}
&\left(\int_{0}^{T}\left\|\int_{0}^{t}e^{-(t-s)(-\triangle)^2} \triangle(\phi^2\cdot \triangle\phi)ds\right\|_{\mathcal{H}^{\alpha-\frac{1}{2}}}^{6}dt\right)^\frac{1}{6}\\
&\leq C\left[\int_{0}^{T}\left(\int_{0}^{t}(t-s)^{-\frac{5}{8}}\|\phi^2\cdot \triangle\phi\|_{\mathcal{H}^{\alpha-\frac{1}{2},\frac{3}{2}}}ds \right)^6dt\right]^\frac{1}{6}\\
&\leq C\left[\int_{0}^{T}\left(\int_{0}^{t}(t-s)^{-\frac{5}{8}}\left(\|\phi^2\|_{L_x^6}\|\triangle \phi\|_{\mathcal{H}^{\alpha-\frac{1}{2}}}+\|\triangle \phi\|_{L_x^2}\|\phi^2 \|_{\mathcal{H}^{\alpha-\frac{1}{2},6}}\right)ds\right)^6dt\right]^\frac{1}{6}\\
&\leq C\left[\int_{0}^{T}\left(\int_{0}^{t}(t-s)^{-\frac{5}{8}}\|\nabla\phi\|^2_{\mathcal{H}^{\alpha-\frac{1}{2},4}}\|\triangle \phi\|_{\mathcal{H}^{\alpha-\frac{1}{2}}}ds\right)^6dt\right]^\frac{1}{6}\\
&\leq CT^{\frac{5}{12}}\left(\|\nabla \phi\|_{L^\infty_t\mathcal{H}^{\alpha-\frac{1}{2},4}}^3+\| \triangle\phi\|_{L^6_t\mathcal{H}^{\alpha-\frac{1}{2}}}^3\right),
\end{split}
\end{align}
and
\begin{align}\label{1.19}
\left(\int_{0}^{T}\left\|\int_{0}^{t}e^{-(t-s)(-\triangle)^2} \triangle(\phi\cdot |\nabla\phi|^2)ds\right\|_{\mathcal{H}^{\alpha-\frac{1}{2}}}^{6}dt\right)^\frac{1}{6}
\leq CT^{\frac{13}{24}}\|\nabla \phi\|_{L^\infty_t\mathcal{H}^{\alpha-\frac{1}{2},4}}^3.
\end{align}
The term $\int_0^te^{-(t-s)(-\triangle)^2}\triangle\phi ds$ is easy to control, here we omit the details.  Finally, combining estimates (\ref{1.10})-(\ref{1.19}), we obtain the inequality (\ref{1.8}) for every $\omega\in E_{\lambda,T}^c$. Here we note that when $\omega\in E_{\lambda,T}^c$, we have $\|\xi\|_{L_t^\infty \mathcal{H}^\alpha\cap L_t^{32}L_x^4}+\|\eta\|_{L^\infty_t\mathcal{H}^{\alpha+\frac{1}{2},4}\cap L^6_t\mathcal{H}^{\alpha+\frac{3}{2}}}\leq \lambda$. Therefore, items with $\lambda^2$ and $\lambda^3$ appear in \eqref{1.8}.

Next, we proceed to prove the inequality (\ref{1.9}). Here we only focus on the higher-order nonlinearity terms $\phi^2\cdot \triangle\phi$ and $\phi\cdot |\nabla\phi|^2$. The rest of terms is similar to the argument of inequality (\ref{1.8}). For the term $\phi^2\cdot \triangle\phi$, Lemmas \ref{lem2.3}, \ref{lem2.4} and the H\"{o}lder inequality yield that
\begin{align*}
&\left\|\int_{0}^{t}e^{-(t-s)(-\Delta)^2}\nabla(\phi_1\cdot  |\nabla\phi_1|^2-\phi_2\cdot  |\nabla\phi_2|^2)ds\right\|_{\mathcal{H}^{\alpha-\frac{1}{2},4}}\\
&=\left\|\int_{0}^{t}e^{-(t-s)(-\Delta)^2}\nabla\left((\phi_1-\phi_2)\cdot  |\nabla\phi_1|^2+\phi_2\cdot  (|\nabla\phi_1|^2-|\nabla\phi_2|^2)\right)ds\right\|_{\mathcal{H}^{\alpha-\frac{1}{2},4}}\\
&\leq C\int_{0}^{t}(t-s)^{-\frac{5}{8}}\left(\|(\phi_1-\phi_2)\cdot   |\nabla\phi_1|^2\|_{\mathcal{H}^{\alpha-\frac{1}{2},\frac{4}{3}}}+\|\phi_2\cdot  (|\nabla\phi_1|^2-|\nabla\phi_2|^2)\|_{\mathcal{H}^{\alpha-\frac{1}{2},\frac{4}{3}}}\right)ds\\
&\leq C\int_{0}^{t}(t-s)^{-\frac{5}{8}}\big(\|\phi_1-\phi_2\|_{L_x^4}\| |\nabla\phi_1|^2\|_{\mathcal{H}^{\alpha-\frac{1}{2}}}+\| |\nabla\phi_1|^2\|_{L_x^2}\|\phi_1-\phi_2\|_{\mathcal{H}^{\alpha-\frac{1}{2},4}}\\
&\quad+\|\phi_2\|_{L_x^4}\||\nabla\phi_1|+|\nabla\phi_2|\|_{\mathcal{H}^{\alpha-\frac{1}{2},4}}\|\nabla(\phi_1-\phi_2)\|_{\mathcal{H}^{\alpha-\frac{1}{2},4}}\\
&\quad+\| \phi_2\|_{\mathcal{H}^{\alpha-\frac{1}{2},4}}\||\nabla\phi_1|+|\nabla\phi_2|\|_{L_x^4}\|\nabla(\phi_1-\phi_2)\|_{L_x^4} \big)ds\\
&\leq CT^{\frac{3}{8}}\|(\nabla \phi_1,\nabla \phi_2)\|^2_{L^\infty_t\mathcal{H}^{\alpha-\frac{1}{2},4}}\|\nabla(\phi_1-\phi_2)\|_{L_t^\infty\mathcal{H}^{\alpha-\frac{1}{2},4}},
\end{align*}
and
\begin{align*}
&\left\|\int_{0}^{t}e^{-(t-s)(-\Delta)^2}\nabla(\phi_1^2\cdot\triangle\phi_1-\phi_2^2\cdot\triangle\phi_2)ds\right\|_{\mathcal{H}^{\alpha-\frac{1}{2},4}}\\
&\leq C\int_{0}^{t}(t-s)^{-\frac{5}{8}}\left(\|(\phi_1^2-\phi_2^2)\cdot \triangle\phi_1\|_{\mathcal{H}^{\alpha-\frac{1}{2},\frac{4}{3}}}
+\|\phi_2^2\cdot(\triangle\phi_1-\triangle\phi_2)\|_{\mathcal{H}^{\alpha-\frac{1}{2},\frac{4}{3}}}\right)ds\\
&\leq C\int_{0}^{t}(t-s)^{-\frac{5}{8}}\big(\|\phi_1^2-\phi_2^2\|_{L_x^4}\| \triangle\phi_1\|_{\mathcal{H}^{\alpha-\frac{1}{2}}}+\| \triangle\phi_1\|_{L_x^2}\|\phi_1^2-\phi_2^2\|_{\mathcal{H}^{\alpha-\frac{1}{2},4}}\\
&\quad+\|\phi_2^2\|_{L_x^4}\| \triangle\phi_1-\triangle\phi_2\|_{\mathcal{H}^{\alpha-\frac{1}{2}}}+\| \triangle\phi_1-\triangle\phi_2\|_{L_x^2}\|\phi_2^2\|_{\mathcal{H}^{\alpha-\frac{1}{2},4}}\big)ds\\
&\leq CT^{\frac{5}{24}}\bigg(\|\nabla \phi_1-\nabla \phi_2\|_{L^\infty_t\mathcal{H}^{\alpha-\frac{1}{2},4}}\|(\nabla \phi_1, \nabla \phi_2)\|_{L^\infty_t\mathcal{H}^{\alpha-\frac{1}{2},4}}\| \triangle\phi\|_{L^6_t\mathcal{H}^{\alpha-\frac{1}{2}}}\\
&\quad+\| \triangle\phi_1-\triangle\phi_2\|_{L^6_t\mathcal{H}^{\alpha-\frac{1}{2}}}\|\nabla\phi_2\|^2_{L_t^\infty\mathcal{H}^{\alpha-\frac{1}{2},4}}\bigg).
\end{align*}
Similarly, we have
\begin{align*}
&\left(\int_{0}^{T}\left\|\int_{0}^{t}e^{-(t-s)(-\triangle)^2} \triangle(\phi_1^2\cdot \triangle\phi_1-\phi_2^2\cdot \triangle\phi_2)ds\right\|_{\mathcal{H}^{\alpha-\frac{1}{2}}}^{6}dt\right)^\frac{1}{6}\\
&\leq CT^{\frac{5}{12}}\bigg(\|\nabla \phi_1-\nabla \phi_2\|_{L^\infty_t\mathcal{H}^{\alpha-\frac{1}{2},4}}\|(\nabla \phi_1, \nabla \phi_2)\|_{L^\infty_t\mathcal{H}^{\alpha-\frac{1}{2},4}}\| \triangle\phi\|_{L^6_t\mathcal{H}^{\alpha-\frac{1}{2}}}\\
&\quad+\| \triangle\phi_1-\triangle\phi_2\|_{L^6_t\mathcal{H}^{\alpha-\frac{1}{2}}}\|\nabla\phi_2\|^2_{L_t^\infty\mathcal{H}^{\alpha-\frac{1}{2},4}}\bigg),
\end{align*}
as well as
\begin{align*}
&\left(\int_{0}^{T}\left\|\int_{0}^{t}e^{-(t-s)(-\triangle)^2} \triangle(\phi_1\cdot |\nabla\phi_1|^2-\phi_2\cdot |\nabla\phi_2|^2)ds\right\|_{\mathcal{H}^{\alpha-\frac{1}{2}}}^{6}dt\right)^\frac{1}{6}\\
&\leq CT^{\frac{13}{24}}\|(\nabla \phi_1,\nabla \phi_2)\|^2_{L^\infty_t\mathcal{H}^{\alpha-\frac{1}{2},4}}\|\nabla(\phi_1-\phi_2)\|_{L_t^\infty\mathcal{H}^{\alpha-\frac{1}{2},4}}.
\end{align*}
Then, inequality (\ref{1.9}) follows from the above. Thus, we complete the proof of Proposition \ref{pro2.1}. Note that if $\omega\in E_{\lambda,T}^c$, $\|\xi\|_{L_t^\infty \mathcal{H}^\alpha\cap L_t^{32}L_x^4}+\|\eta\|_{L^\infty_t\mathcal{H}^{\alpha+\frac{1}{2},4}\cap L^6_t\mathcal{H}^{\alpha+\frac{3}{2}}}\leq \lambda$. Hence, items with $\lambda$ and $\lambda^2$ appear in \eqref{1.9}.
\end{proof}
Now, we begin to prove Theorem \ref{the1.1} in the following.

\noindent\underline{Proof of Theorem 1.1}. For any fixed $T\in (0,1]$, from Proposition \ref{pro2.1}, we choose $T^{\frac{2}{3}}\lambda^6=\varepsilon^6\ll 1$ for $\varepsilon>0$ such that
\begin{eqnarray*}
&&C T^{\frac{2}{3}}\left(\lambda^2+\lambda^3+(4C\lambda^3)^2+(4C\lambda^3)^3\right)\leq 4C\lambda^3,\\
&&C T^{\frac{2}{3}}\left(\lambda+\lambda^2+4C\lambda^3+(4C\lambda^3)^2\right)\leq \frac{1}{2}.
\end{eqnarray*}
In the spirit of N. Burq and N. Tzvetkov \cite{burq,burq1},  we define the set
\begin{eqnarray}\label{1.11}
\Omega_{T}=E^c_{\lambda=\varepsilon T^{-\frac{1}{9}}, T}~ {\rm and}~\widetilde{\Omega}=\bigcup_{j\in \mathbb{N}^+} \Omega_{\frac{1}{j}},
\end{eqnarray}
where $E^c_{\lambda, T}$ is the complement of set $E_{\lambda, T}$. We conclude from Lemma \ref{lem1.2} (\eqref{2.2}) that
\begin{eqnarray}\label{2.19}
\mathbb{P}(\Omega_T)=1-\mathbb{P}(\Omega_T^c)\geq 1-c_1{\rm exp}\left(-\frac{c}{\|(u_0,\phi_0)\|^2_{\mathcal{H}^\alpha \times \mathcal{H}^{\alpha+\frac{3}{2}}}T^{\frac{2}{9}}}\right).
\end{eqnarray}
and $\mathbb{P}(\widetilde{\Omega})=1$. Therefore, for each $\omega\in \widetilde{\Omega}$, there exists $j\in \mathbb{N}^+$ such that $w\in E^c_{\lambda=\varepsilon j^{\frac{1}{9}}, j^{-1}}$. By Proposition \ref{pro2.1}, we obtain for the fixed $\omega$, the mapping $\mathcal{M}$ is a contraction in ball $B(0, 4C\lambda^3)$ in $X_{1/j}$. The Banach fixed point theorem implies that for almost surely $\omega\in \Omega$, there exists a $T_\omega$ such that $(u^\omega,\phi^\omega)$ is a unique solution in $X_{T_\omega}$. Thus, we get the first part of Theorem \ref{the1.1}. Following the similar argument and (\ref{2.19}), we are able to finish the proof of Theorem \ref{the1.1}.

\maketitle
\section{Proof of Theorem 1.2}\label{sec3}
In this section, we are going to prove Theorem \ref{the1.2}. First, we give some useful lemmas and proposition.
\begin{lemma}\label{lem3.1} For $m\geq 1$, $\gamma,l\geq 0$, $\alpha,\beta>0$ with $\alpha+\beta\leq n$, there exists a constant $C$ such that
\begin{eqnarray*}
\left\|\int_{0}^{t}e^{-(t-s)(-\triangle)^m}\partial_x^\gamma (fg)ds\right\|_{L_x^\frac{n}{l}}\leq C\int_{0}^{t}(t-s)^{-\frac{\alpha+\beta-l+\gamma}{2m}}\|f\|_{L_x^\frac{n}{\alpha}}\|g\|_{L_x^\frac{n}{\beta}}ds.
\end{eqnarray*}
\end{lemma}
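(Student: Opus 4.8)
The plan is to reduce the estimate to a product of two elementary facts: the smoothing estimate for the semigroup $e^{-t(-\triangle)^m}$ (Lemma \ref{lem2.3}) applied with a fractional derivative $\partial_x^\gamma$ moved onto the heat-type kernel, and the fractional Leibniz/Hölder splitting for the product $fg$ (in the form used throughout Section \ref{sec2}). Concretely, inside the time integral I would write, for each fixed $s\in(0,t)$,
\begin{eqnarray*}
\left\|e^{-(t-s)(-\triangle)^m}\partial_x^\gamma(fg)\right\|_{L_x^{n/l}}
\leq C(t-s)^{-\frac{\gamma}{2m}-\frac{n}{2m}\left(\frac1{q}-\frac{l}{n}\right)}\|fg\|_{L_x^{q}},
\end{eqnarray*}
where the exponent $q$ is chosen so that $\frac1q=\frac{\alpha}{n}+\frac{\beta}{n}$, i.e. $L_x^q=L_x^{n/(\alpha+\beta)}$; this is legitimate for the $L^p$–$L^q$ bound of Lemma \ref{lem2.3} precisely because $\alpha+\beta\le n$ guarantees $q\ge1$ and $q\le n/l$ is arranged by the hypotheses on $l$. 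With this choice the power of $(t-s)$ becomes $-\frac{\gamma}{2m}-\frac{1}{2m}(\alpha+\beta-l)=-\frac{\alpha+\beta-l+\gamma}{2m}$, which is exactly the claimed exponent.

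Next I would estimate $\|fg\|_{L_x^{n/(\alpha+\beta)}}$ by Hölder's inequality with exponents $n/\alpha$ and $n/\beta$ (whose reciprocals sum to $(\alpha+\beta)/n$), giving $\|fg\|_{L_x^{n/(\alpha+\beta)}}\le\|f\|_{L_x^{n/\alpha}}\|g\|_{L_x^{n/\beta}}$. Substituting into the displayed bound and integrating in $s$ over $(0,t)$ yields
\begin{eqnarray*}
\left\|\int_0^t e^{-(t-s)(-\triangle)^m}\partial_x^\gamma(fg)\,ds\right\|_{L_x^{n/l}}
\leq C\int_0^t (t-s)^{-\frac{\alpha+\beta-l+\gamma}{2m}}\|f\|_{L_x^{n/\alpha}}\|g\|_{L_x^{n/\beta}}\,ds,
\end{eqnarray*}
which is the assertion. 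Here I would use Minkowski's integral inequality to pull the $L_x^{n/l}$ norm inside the time integral as the first step, before applying the pointwise-in-$s$ semigroup bound.

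The only genuinely delicate point is bookkeeping on the exponents: one must check that $q=n/(\alpha+\beta)\in[1,\infty]$ and that $q\le n/l$ so that Lemma \ref{lem2.3} applies in the direction $L^q\to L^{n/l}$, and that the resulting time-singularity exponent $\frac{\alpha+\beta-l+\gamma}{2m}$ is the one claimed. The first is exactly where the hypothesis $\alpha+\beta\le n$ is used (it is what makes $\|fg\|$ live in a space with integrability exponent $\ge1$); the condition relating $l$ to $\alpha+\beta$ (implicitly $l\le\alpha+\beta$, so that $n/l\ge q$) is what makes the semigroup step admissible, and when $\gamma>0$ one also invokes the derivative-gain part of Lemma \ref{lem2.3}. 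There is no integrability-in-$s$ requirement at this stage — the statement is a pointwise-in-$t$ bound with the singular kernel left inside the integral — so no Beta-function computation is needed here; that is deferred to the applications of the lemma in the proof of Theorem \ref{the1.2}. I do not expect any serious obstacle beyond this exponent arithmetic.
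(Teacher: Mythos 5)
Your argument is exactly the paper's proof, which simply states that the result follows from Lemma \ref{lem2.3} and the H\"older inequality; you have filled in the same steps (Minkowski in time, the $L^{n/(\alpha+\beta)}\to L^{n/l}$ smoothing bound with derivative gain $\gamma$, and H\"older with exponents $n/\alpha$, $n/\beta$) and the exponent arithmetic checks out. Your observation that the application of Lemma \ref{lem2.3} implicitly requires $l\le\alpha+\beta$ is a correct and worthwhile clarification of a hypothesis the lemma statement leaves tacit.
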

\begin{proof} The result can be easily obtained by Lemma \ref{lem2.3} and the H\"{o}lder inequality.
\end{proof}

\begin{lemma}\label{lem3.2} For $p>0, q>0$, there exists a constant $C(p,q)$ such that
\begin{eqnarray*}
\int_{0}^{t}(t-s)^{p-1}s^{q-1}ds=C(p,q)t^{p+q-1}.
\end{eqnarray*}
\end{lemma}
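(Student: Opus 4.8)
The statement to prove is Lemma \ref{lem3.2}, the classical Beta-function identity
\[
\int_0^t (t-s)^{p-1}s^{q-1}\,ds = C(p,q)\,t^{p+q-1}, \qquad p,q>0.
\]

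\textbf{Approach.} The plan is to reduce the integral to a dimensionless one by the natural rescaling $s = t\sigma$. Since $p,q>0$ the integrand is (locally) integrable near both endpoints $s=0$ and $s=t$, so the integral converges and the manipulation is legitimate. After the substitution the $t$-dependence factors out completely and what remains is a constant, which one recognizes as the Euler Beta function $B(p,q)=\int_0^1(1-\sigma)^{p-1}\sigma^{q-1}\,d\sigma$.

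\textbf{Key steps, in order.} First I would fix $t>0$ (the case $t=0$ being trivial) and perform the change of variables $s=t\sigma$, $ds=t\,d\sigma$, so that $t-s=t(1-\sigma)$ and the limits $s\in(0,t)$ become $\sigma\in(0,1)$. This gives
\[
\int_0^t (t-s)^{p-1}s^{q-1}\,ds
= \int_0^1 \bigl(t(1-\sigma)\bigr)^{p-1}(t\sigma)^{q-1}\,t\,d\sigma
= t^{p+q-1}\int_0^1 (1-\sigma)^{p-1}\sigma^{q-1}\,d\sigma.
\]
Second, I would observe that the remaining integral $\int_0^1(1-\sigma)^{p-1}\sigma^{q-1}\,d\sigma$ is finite — near $\sigma=0$ the integrand behaves like $\sigma^{q-1}$ with $q-1>-1$, and near $\sigma=1$ like $(1-\sigma)^{p-1}$ with $p-1>-1$ — and is independent of $t$; define $C(p,q)$ to be its value (equivalently $C(p,q)=B(p,q)=\Gamma(p)\Gamma(q)/\Gamma(p+q)$). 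Third, I would record the conclusion $\int_0^t(t-s)^{p-1}s^{q-1}\,ds = C(p,q)\,t^{p+q-1}$, which is exactly the claim.

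\textbf{Main obstacle.} There is essentially no obstacle: the only point requiring a word of care is the convergence of the integrals at the endpoints, which is guaranteed precisely by the hypotheses $p>0$ and $q>0$; everything else is the one-line scaling computation above.
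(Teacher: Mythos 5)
Your proposal is correct and follows essentially the same route as the paper: the rescaling $s=t\sigma$ factors out $t^{p+q-1}$ and reduces the integral to the Euler Beta function $B(p,q)$, whose convergence for $p,q>0$ you justify by the endpoint behaviour exactly as the paper does. No differences worth noting.
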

\begin{proof} Since
\begin{align*}
\int_{0}^{t}(t-s)^{p-1}s^{q-1}ds&=\int_{0}^{t}t^{p-1}\left(1-\frac{s}{t}\right)^{p-1}t^{q-1}\left(\frac{s}{t}\right)^{q-1}ds\\
&=\int_{0}^{t}t^{p+q-1}\left(1-\frac{s}{t}\right)^{p-1}\left(\frac{s}{t}\right)^{q-1}d\left(\frac{s}{t}\right),
\end{align*}
it follows from variable substitutions that
\begin{align*}
&\int_{0}^{t}t^{p+q-1}\left(1-\frac{s}{t}\right)^{p-1}\left(\frac{s}{t}\right)^{q-1}d\left(\frac{s}{t}\right)\\
&=t^{p+q-1}\int_{0}^{1}(1-x)^{p-1}x^{q-1}dx\\
&=C(p,q)t^{p+q-1}.
\end{align*}
Here we used the fact that Beta function $B(p,q)$ converges for any $p,q >0$.
\end{proof}

\begin{proposition}\label{pro3.1} There exists a constant $C$ which is independent of $T$ such that for every $\omega\in \widetilde{E}_{\lambda,T}^c$, $\delta\in (\frac{2}{7}, 1)$, the mapping $\mathcal{M}$ satisfies
\begin{eqnarray}\label{3.2}
\|\mathcal{M}(u,\phi)\|_{\widetilde{X}_T}\leq C\left(\lambda^2+\lambda^3+\|(u,\phi)\|^2_{\widetilde{X}_{T}}+\| \phi\|_{L^\infty_t (t^{\frac{1-\delta}{3}}\mathcal{H}^{1,\frac{3}{\delta}})\cap L^\infty_t (t^{\frac{1}{3}}\mathcal{H}^{2,3} )}^3\right),
\end{eqnarray}
and
\begin{align}\label{3.3}
\begin{split}
&\|\mathcal{M}(u_1,\phi_1)-\mathcal{M}(u_2,\phi_2)\|_{\widetilde{X}_T}\\
&\leq C\left(\lambda+\lambda^2+\|(u_1,u_2,\phi_1,\phi_2)\|_{\widetilde{X}_{T}}+\| (\phi_1,\phi_2)\|_{L^\infty_t (t^{\frac{1-\delta}{3}}\mathcal{H}^{1,\frac{3}{\delta}})\cap L^\infty_t (t^{\frac{1}{3}}\mathcal{H}^{2,3} )}^2\right)\\
&\quad\times\|(u_1-u_2, \phi_1-\phi_2)\|_{\widetilde{X}_{T}},
\end{split}
\end{align}
where space
\begin{eqnarray*}
\widetilde{X}_T:=L_t^\infty (t^{\frac{1-\delta}{2}}L_x^{\frac{3}{\delta}})\cap L_t^\infty (t^{\frac{1}{2}}\mathcal{H}^{1})\times L^\infty_t (t^{\frac{1-\delta}{3}}\mathcal{H}^{1,\frac{3}{\delta}})\cap L^\infty_t (t^{\frac{1}{3}}\mathcal{H}^{2,3} ),
\end{eqnarray*}
and set
\begin{eqnarray*}
\widetilde{E}_{\lambda,T}:=\left\{\omega\in \Omega; \;\|\xi\|_{L_t^\infty (t^{\frac{1-\delta}{2}}L_x^{\frac{3}{\delta}})\cap L_t^\infty (t^{\frac{1}{2}}\mathcal{H}^{1})}+\|\eta\|_{L^\infty_t (t^{\frac{1-\delta}{3}}\mathcal{H}^{1,\frac{3}{\delta}})\cap L^\infty_t (t^{\frac{1}{3}}\mathcal{H}^{2,3} )}\geq \lambda\right\}.
\end{eqnarray*}
\end{proposition}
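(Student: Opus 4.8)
\textbf{Proof plan for Proposition \ref{pro3.1}.} The strategy is to repeat the fixed-point/contraction scheme of Proposition \ref{pro2.1}, but now in the time-weighted norms that appear in $\widetilde{X}_T$, so that the bounds become \emph{independent} of $T$ (this is what makes the global statement of Theorem \ref{the1.2} possible). The key mechanism replacing the crude factor $T^{2/3}$ of Section \ref{sec2} is Lemma \ref{lem3.2}: after inserting the $L^p$--$L^q$ decay of the semigroup from Lemma \ref{lem3.1} and pulling out the time weights $s^{(1-\delta)/2}$, $s^{1/2}$, $s^{(1-\delta)/3}$, $s^{1/3}$ carried by each factor of $u$, $\phi$, $\xi$, $\eta$, every time integral becomes of the form $\int_0^t (t-s)^{p-1}s^{q-1}\,ds = C(p,q)t^{p+q-1}$, and the exponent $p+q-1$ is matched exactly to the outer weight of the target space. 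The restriction $\delta\in(\tfrac27,1)$ is precisely the range in which all the resulting $p,q$ are strictly positive (so the Beta function converges) \emph{and} the exponents balance, i.e. the nonlinear map sends $\widetilde X_T$ into $\widetilde X_T$ with a $T$-independent constant.

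Concretely I would organize the proof term by term, mirroring the headings in the proof of Proposition \ref{pro2.1}: (i) the Navier--Stokes bilinear term $\mathrm{P}\,\mathrm{div}(u\otimes u)$ and its mixed analogues $\mathrm{div}(u\otimes\xi)$, $\mathrm{div}(\xi\otimes\xi)$, estimated in both components of $\widetilde X_T$, i.e. in $t^{(1-\delta)/2}L^{3/\delta}$ and in $t^{1/2}\mathcal H^1$; (ii) the coupling term $\triangle\phi\cdot\nabla\phi$; (iii) the transport term $u\cdot\nabla\phi$; (iv) the cubic Cahn--Hilliard terms $\phi\,|\nabla\phi|^2$ and $\phi^2\,\triangle\phi$, which are the genuinely new difficulty; (v) the linear term $\triangle\phi$, which is immediate. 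For each term one applies Lemma \ref{lem3.1} with $n=3$, choosing the Hölder exponents so that the product $fg$ (or $fgh$) lands in the right $L^{n/\alpha}$--$L^{n/\beta}$ pairing — using the Sobolev embeddings $\mathcal H^{1,3/\delta}\hookrightarrow L^{p}$ and $\mathcal H^{2,3}\hookrightarrow L^\infty$ on $\mathbb T^3$ to convert $\widetilde X_T$-norms into the Lebesgue norms needed — then bounds the $s$-integral via Lemma \ref{lem3.2}, and finally checks that the power of $t$ produced equals the outer weight. Exactly as in Proposition \ref{pro2.1}, on the good set $\widetilde E_{\lambda,T}^c$ one has $\|\xi\|+\|\eta\|\le\lambda$, so the terms with $\xi,\eta$ contribute the $\lambda^2,\lambda^3$ (resp.\ $\lambda,\lambda^2$) summands; the cross-terms are omitted as announced. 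Estimate \eqref{3.3} is obtained by the same computation applied to differences, writing $\phi_1^2\triangle\phi_1-\phi_2^2\triangle\phi_2=(\phi_1^2-\phi_2^2)\triangle\phi_1+\phi_2^2\,\triangle(\phi_1-\phi_2)$ and $\phi_1|\nabla\phi_1|^2-\phi_2|\nabla\phi_2|^2=(\phi_1-\phi_2)|\nabla\phi_1|^2+\phi_2(|\nabla\phi_1|^2-|\nabla\phi_2|^2)$, etc., so the cubic terms become quadratic-in-$(\phi_1,\phi_2)$ times the difference.

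The main obstacle I anticipate is bookkeeping the exponents for the cubic terms in step (iv) so that the time powers come out right with a $T$-independent constant, and simultaneously keeping all Beta-function exponents positive: for a triple product $\phi\cdot\phi\cdot D^2\phi$ one must distribute the three weights $s^{(1-\delta)/3}$ (twice) and $s^{1/3}$ (once, from the $\mathcal H^{2,3}$ factor) against the semigroup decay $(t-s)^{-(\alpha+\beta-l+\gamma)/2m}$ with $m=2$, and the requirement that the leftover $(t-s)$-power exceed $-1$ while the output $t$-power matches $(1-\delta)/3$ or $1/3$ is exactly where $\delta>\tfrac27$ is forced. A secondary technical point is the Leray projector and the divergence structure in the first equation: as in \eqref{1.10} one writes $\mathrm{P}\,\mathrm{div}(u\otimes u)=\mathrm{P}\,\triangle^{1/2}(\ldots)$ to move a derivative onto the semigroup, and one must track fractional derivatives through Lemma \ref{lem3.1} (the parameter $\gamma$) consistently in both target norms. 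Once all terms are assembled, summing the term-by-term bounds yields \eqref{3.2} and \eqref{3.3}, completing the proof; the actual closing of the fixed-point argument and the probability estimate $\mathbb P(\Lambda)\ge\varepsilon$ are then carried out afterwards exactly as in Section \ref{sec2}, using the analogue of \eqref{2.2} for the weighted norms defining $\widetilde E_{\lambda,T}$.
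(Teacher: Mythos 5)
Your plan follows the paper's proof essentially verbatim: a term-by-term application of Lemma \ref{lem3.1} with suitably chosen exponents $(\alpha,\beta)$, extraction of the time weights $s^{\frac{1-\delta}{2}}, s^{\frac{1}{2}}, s^{\frac{1-\delta}{3}}, s^{\frac{1}{3}}$ so that Lemma \ref{lem3.2} produces a $T$-independent constant whose $t$-power matches the outer weight of $\widetilde{X}_T$, the $\lambda$-powers coming from the bound available on $\widetilde{E}_{\lambda,T}^c$, and the standard difference decompositions of the cubic terms for \eqref{3.3}. The only content you defer --- the explicit exponent choices for each term (e.g. $\alpha=\frac{7\delta-2}{6}$, $\beta=1+\delta$ for $\phi^2\cdot\triangle\phi$, which is precisely where the restriction $\delta>\frac{2}{7}$ enters, exactly as you anticipated) --- is carried out in the paper but introduces no idea beyond your outline.
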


\begin{proof} As in Proposition \ref{pro2.1}, in order to close the estimates, we estimate all terms one by one. Taking $\alpha=\delta$, $\beta=1$ in Lemma \ref{lem3.1} and using Lemma \ref{lem3.2}, we get
\begin{align}\label{3.4}
\begin{split}
&\left\|\int_{0}^{t}e^{(t-s)\triangle} (u\cdot \nabla u) ds\right\|_{L_t^\infty(t^{\frac{1-\delta}{2}}L_x^{\frac{3}{\delta}})}\\
&\leq C\sup_{t\in [0,T]}t^{\frac{1-\delta}{2}}\int_{0}^{t}(t-s)^{-\frac{1}{2}}\|u\|_{L_x^\frac{3}{\delta}}\|\nabla u\|_{L_x^3}ds\\
&\leq C\sup_{t\in [0,T]}t^{\frac{1-\delta}{2}}\int_{0}^{t}(t-s)^{-\frac{1}{2}}s^{-1+\frac{\delta}{2}}\left(s^{\frac{1-\delta}{2}}\|u\|_{L_x^\frac{3}{\delta}}\right)
\left(s^{\frac{1}{2}}\|\nabla u\|_{L_x^3}\right)ds\\
&\leq C\left\|t^{\frac{1-\delta}{2}}\|u\|_{L_x^\frac{3}{\delta}}\right\|_{L_t^\infty}\left\|t^{\frac{1}{2}}\|\nabla u\|_{L_x^3}\right\|_{L_t^\infty},
\end{split}
\end{align}
and
\begin{align}
\begin{split}
&\left\|\int_{0}^{t}e^{(t-s)\triangle} \nabla(u\cdot \nabla u) ds\right\|_{L_t^\infty(t^{\frac{1}{2}}L_x^{3})}\\
&\leq C\sup_{t\in [0,T]}t^{\frac{1}{2}}\int_{0}^{t}(t-s)^{-\frac{\delta+1}{2}}\|u\|_{L_x^\frac{3}{\delta}}\|\nabla u\|_{L_x^3}ds\\
&\leq C\sup_{t\in [0,T]}t^{\frac{1}{2}}\int_{0}^{t}(t-s)^{-\frac{\delta+1}{2}}s^{-1+\frac{\delta}{2}}\left(s^{\frac{1-\delta}{2}}\|u\|_{L_x^\frac{3}{\delta}}\right)
\left(s^{\frac{1}{2}}\|\nabla u\|_{L_x^3}\right)ds\\
&\leq C\left\|t^{\frac{1-\delta}{2}}\|u\|_{L_x^\frac{3}{\delta}}\right\|_{L_t^\infty}\left\|t^{\frac{1}{2}}\|\nabla u\|_{L_x^3}\right\|_{L_t^\infty}.
\end{split}
\end{align}
Taking $\alpha=\frac{2+2\delta}{3}$, $\beta=1$ in Lemma \ref{lem3.1} and applying Lemma \ref{lem3.2}, one has
\begin{align}
&\left\|\int_{0}^{t}e^{(t-s)\triangle} (\triangle \phi\cdot \nabla \phi) ds\right\|_{L_t^\infty(t^{\frac{1-\delta}{2}}L_x^{\frac{3}{\delta}})}\nonumber\\
&\leq C\sup_{t\in [0,T]}t^{\frac{1-\delta}{2}}\int_{0}^{t}(t-s)^{-\frac{5-\delta}{6}}\|\nabla \phi\|_{L_x^\frac{3}{\alpha}}\|\triangle\phi\|_{L_x^3}ds\nonumber\\
&\leq C\sup_{t\in [0,T]}t^{\frac{1-\delta}{2}}\int_{0}^{t}(t-s)^{-\frac{5-\delta}{6}}s^{-\frac{2-\delta}{3}}\left(s^{\frac{1-\delta}{3}}\|\nabla \phi\|_{L_x^\frac{3}{\delta}}\right)
\left(s^{\frac{1}{3}}\|\triangle u\|_{L_x^3}\right)ds\\
&\leq C\sup_{t\in [0,T]}t^{\frac{1-\delta}{2}}t^{\frac{\delta-1}{2}}\left\|\nabla \phi\|_{L_x^\frac{3}{\delta}}\right\|_{L_t^\infty}\left\|t^{\frac{1}{3}}\|\triangle \phi\|_{L_x^3}\right\|_{L_t^\infty}\nonumber\\
&\leq C\left\|t^{\frac{1-\delta}{3}}\|\nabla \phi\|_{L_x^\frac{3}{\delta}}\right\|_{L_t^\infty}\left\|t^{\frac{1}{3}}\|\triangle \phi\|_{L_x^3}\right\|_{L_t^\infty}.\nonumber
\end{align}

Also, we have by choosing $\alpha=\frac{2+2\delta}{3}$, $\beta=1$ in Lemma \ref{lem3.1} and using Lemma \ref{lem3.2}
\begin{align}
\left\|\int_{0}^{t}e^{(t-s)\triangle} \nabla(\triangle \phi\cdot \nabla \phi) ds\right\|_{L_t^\infty(t^{\frac{1}{3}}L_x^{3})}
\leq C\left\|t^{\frac{1-\delta}{3}}\|\nabla \phi\|_{L_x^\frac{3}{\delta}}\right\|_{L_t^\infty}\left\|t^{\frac{1}{3}}\|\triangle\phi\|_{L_x^3}\right\|_{L_t^\infty}.
\end{align}

Next, we focus on the the phase parameter equation. Taking $\alpha=1,\beta=\delta$ and $\alpha=\frac{4\delta}{3}, \beta=1$ in Lemma \ref{lem3.1}, respectively, we obtain from Lemma \ref{lem3.2}
\begin{align}
\begin{split}
&\left\|\int_{0}^{t}e^{-(t-s)(-\triangle)^2} \nabla(u\cdot \nabla \phi) ds\right\|_{L_t^\infty(t^{\frac{1-\delta}{3}}L_x^{\frac{3}{\delta}})}\\
&\leq C\sup_{t\in [0,T]}t^{\frac{1-\delta}{3}}\int_{0}^{t}(t-s)^{-\frac{1}{2}}\|u\|_{L_x^3}\|\nabla\phi\|_{L_x^\frac{3}{\delta}}ds\\
&\leq C\sup_{t\in [0,T]}t^{\frac{1-\delta}{3}}\int_{0}^{t}(t-s)^{-\frac{1}{2}}s^{-\frac{5-2\delta}{6}}\left(s^{\frac{1}{2}}\|\nabla u\|_{L_x^3}\right)\left(s^{\frac{1-\delta}{3}}\|\nabla\phi\|_{L_x^\frac{3}{\delta}}\right)ds\\
&\leq C\left\|t^{\frac{1}{2}}\|\nabla u\|_{L_x^3}\right\|_{L_t^\infty}\left\|t^{\frac{1-\delta}{3}}\|\nabla \phi\|_{L_x^\frac{3}{\delta}}\right\|_{L_t^\infty},
\end{split}
\end{align}
and
\begin{align}
\begin{split}
&\left\|\int_{0}^{t}e^{-(t-s)(-\triangle)^2} \triangle(u\cdot \nabla \phi) ds\right\|_{L_t^\infty(t^{\frac{1}{3}}L_x^{3})}\\
&\leq C\sup_{t\in [0,T]}t^{\frac{1}{3}}\int_{0}^{t}(t-s)^{-\frac{2\delta+3}{6}}s^{-\frac{5-2\delta}{6}}\left(s^{\frac{1}{2}}\|\nabla u\|_{L_x^3}\right)
\left(s^{\frac{1-\delta}{3}}\|\nabla \phi\|_{L_x^\frac{9}{4\delta}}\right)ds\\
&\leq C\left\|t^{\frac{1}{2}}\|\nabla u\|_{L_x^3}\right\|_{L_t^\infty}\left\|t^{\frac{1-\delta}{3}}\|\nabla \phi\|_{L_x^\frac{3}{\delta}}\right\|_{L_t^\infty}.
\end{split}
\end{align}
We proceed to estimate the higher nonlinearity terms $\triangle f(\phi)$. Taking $\alpha=\frac{5\delta+1}{3}, \beta=\delta$ in Lemma \ref{lem3.1} and using Lemma \ref{lem3.2} lead to
\begin{align}
\begin{split}
&\left\|\int_{0}^{t}e^{-(t-s)(-\Delta)^2}\nabla(\phi\cdot  |\nabla\phi|^2)ds\right\|_{L_t^\infty(t^{\frac{1-\delta}{3}}L_x^{\frac{3}{\delta}})}\\
&\leq C\sup_{t\in [0,T]}t^{\frac{1-\delta}{3}}\int_{0}^{t}(t-s)^{-\frac{\alpha+2\beta-\delta+1}{4}}\|\phi\|_{\frac{3}{\alpha}}\||\nabla \phi|^2\|_{\frac{3}{2\beta}}ds\\
&\leq C\sup_{t\in [0,T]}t^{\frac{1-\delta}{3}}\int_{0}^{t}(t-s)^{-\frac{2\delta+1}{3}}s^{-(1-\delta)}ds \left\|t^{\frac{1-\delta}{3}}\|\nabla \phi\|_{L_x^\frac{3}{\delta}}\right\|_{L_t^\infty}^3\\
&\leq C\left\|t^{\frac{1-\delta}{3}}\|\nabla \phi\|_{L_x^\frac{3}{\delta}}\right\|_{L_t^\infty}^3,
\end{split}
\end{align}
and taking $\alpha=\frac{7\delta-2}{6}$ for $\delta>\frac{2}{7}$, $\beta=1+\delta$ in Lemma \ref{lem3.1} and using Lemma \ref{lem3.2} yield that
\begin{align}
&\left\|\int_{0}^{t}e^{-(t-s)(-\Delta)^2}\nabla(\phi^2\cdot\triangle\phi)ds\right\|_{L_t^\infty(t^{\frac{1-\delta}{3}}L_x^{\frac{3}{\delta}})}\nonumber\\
&\leq C\sup_{t\in [0,T]}t^{\frac{1-\delta}{3}}\int_{0}^{t}(t-s)^{-\frac{\delta+1}{3}}s^{-\frac{3-2\delta}{3}}\left(s^{\frac{1-\delta}{3}}\|\nabla \phi\|_{L_x^\frac{3}{\delta}}\right)^2\left(s^\frac{1}{3}\|\triangle \phi\|_{L^3_x}\right)ds\\
&\leq C \left\|t^{\frac{1-\delta}{3}}\|\nabla \phi\|_{L_x^\frac{3}{\delta}}\right\|_{L_t^\infty}^2\left\|t^{\frac{1}{3}}\|\triangle\phi\|_{L_x^3}\right\|_{L_t^\infty}.\nonumber
\end{align}
Again, choosing $\alpha=\frac{4\delta-1}{3}$ for $\delta>\frac{1}{4}$, $\beta=1$ and $\alpha=\frac{2\delta+1}{3}$, $\beta=\delta$ in Lemma \ref{lem3.1} and employing Lemma \ref{lem3.2}, we have
\begin{align}
\begin{split}
&\left\|\int_{0}^{t}e^{-(t-s)(-\triangle)^2} \triangle(\phi^2\cdot \triangle\phi)ds\right\|_{L_t^\infty(t^{\frac{1}{3}}L_x^{3})}\\
&\leq C\sup_{t\in [0,T]}t^{\frac{1}{3}}\int_{0}^{t}(t-s)^{-\frac{2\delta+1}{3}}\|\phi\|_{L_x^\frac{3}{\alpha}}^2\|\triangle \phi\|_{L_x^3}ds\\
&\leq C\sup_{t\in [0,T]}t^{\frac{1}{3}}\int_{0}^{t}(t-s)^{-\frac{2\delta+1}{3}}s^{-\frac{3-2\delta}{3}}\left(s^{\frac{1-\delta}{3}}\|\nabla \phi\|_{L_x^\frac{3}{\delta}}\right)^2\left(s^{\frac{1}{3}}\|\triangle \phi\|_{L^3_x}\right)ds\\
&\leq C\left\|t^{\frac{1-\delta}{3}}\|\nabla \phi\|_{L_x^\frac{3}{\delta}}\right\|_{L_t^\infty}^2\left\|t^{\frac{1}{3}}\|\triangle\phi\|_{L_x^3}\right\|_{L_t^\infty},
\end{split}
\end{align}
and
\begin{align}\label{3.13}
\begin{split}
&\left\|\int_{0}^{t}e^{-(t-s)(-\Delta)^2}\triangle(\phi\cdot  |\nabla\phi|^2)ds\right\|_{L_t^\infty(t^{\frac{1}{3}}L_x^{3})}\\
&\leq C\sup_{t\in [0,T]}t^{\frac{1}{3}}\int_{0}^{t}(t-s)^{-\frac{2\delta+1}{3}}\|\phi\|_{L_x^\frac{3}{\alpha}}\left\||\nabla \phi|^2\right\|_{L_x^\frac{3}{2\beta}}ds\\
&\leq C\sup_{t\in [0,T]}t^{\frac{1}{3}}\int_{0}^{t}(t-s)^{-\frac{2\delta+1}{3}}s^{-\frac{2(1-\delta)+1}{3}}\left(t^{\frac{1-\delta}{3}}\|\nabla \phi\|_{L_x^\frac{3}{\delta}}\right)^2\left(s^{\frac{1}{3}}\|\triangle \phi\|_{L_x^3}\right)ds\\
&\leq C\left\|t^{\frac{1-\delta}{3}}\|\nabla \phi\|_{L_x^\frac{3}{\delta}}\right\|_{L_t^\infty}^2\left\|t^{\frac{1}{3}}\|\triangle\phi\|_{L_x^3}\right\|_{L_t^\infty}.
\end{split}
\end{align}
The desired result \eqref{3.2} follows from estimates \eqref{3.4}-\eqref{3.13}. Similarly, we can get \eqref{3.3}.
\end{proof}

Now, we are in the position to prove Theorem \ref{the1.2}.

\noindent\underline{Proof of Theorem \ref{the1.2}}. From Proposition \ref{pro3.1}, we choose $\lambda\in (0,1)$ such that
\begin{eqnarray*}
&&C\left(\lambda^2+\lambda^3+(2C\lambda^2)^2+(2C\lambda^2)^3\right)\leq 2C\lambda^2,\\
&&C\left(\lambda+\lambda^2+2C\lambda^2+(2C\lambda^2)^2\right)\leq \frac{1}{2}.
\end{eqnarray*}
By Proposition \ref{pro3.1}, we infer that the mapping $\mathcal{M}$ is a contraction in a ball $B(0,2C\lambda^2)$ in $\widetilde{X}_{T}$. Let
\begin{eqnarray}
\Omega_T=\widetilde{E}_{\lambda,T}^c.
\end{eqnarray}
For any $T>0$, there exists $i>0$ such that $2^{i-1}\leq T<2^i$, $\Omega_{2^i}\subset\Omega_{T}\subset \Omega_{2^{i-1}}$. In addition, by Lemma \ref{lem1.2} (\eqref{2.2})
\begin{eqnarray}\label{3.14}
\mathbb{P}(\Omega_{2^{i-1}})\geq 1-\mathbb{P}(\widetilde{E}_{\lambda,2^{i-1}})\geq 1-c_1{\rm exp}\left(-\frac{c\lambda^2}{\|(u_0, \phi_0)\|^2_{\mathcal{H}^{1}\times \mathcal{H}^2}}\right).
\end{eqnarray}
For any $\varepsilon\in (0,1)$, let
\begin{align*}
1-c_1{\rm exp}\left(-\frac{c\lambda^2}{\|(u_0, \phi_0)\|^2_{\mathcal{H}^{1}\times \mathcal{H}^2}}\right)\geq \varepsilon.
\end{align*}
After a simple calculation, we get from (\ref{3.14}) that if
\begin{align*}
\|(u_0,\phi_0)\|_{{\mathcal{H}^1}\times \mathcal{H}^{2}}\leq \sqrt{-\frac{c}{\ln(1-\varepsilon)}},
\end{align*}
there exists a set $\Lambda=\bigcap_{i\geq 1} \Omega_{2^{i}}$ such that $\mathbb{P}(\Lambda)\geq \varepsilon$ for all $\omega\in \Lambda$, and equation (\ref{Equ1.1}) has a unique global solution in $\widetilde{X}_T$. This completes the proof of Theorem \ref{the1.2}.

\section*{Acknowledgments}
Z.Qiu is supported by the CSC under grant No.201806160015.  H.Wang is supported by the National Natural Science Foundation of China (Grant No. 11901066), the
Natural Science Foundation of Chongqing (Grant No. cstc2019jcyj-msxmX0167) and Project No. 2019CDXYST0015 supported by the Fundamental Research Funds for the Central Universities.

\end{document}